\theoremstyle{plain}
\newtheorem{corollary}{Corollary}
\newtheorem{definition}{Definition}
\newtheorem{example}{Example}
\newtheorem{lemma}{Lemma}
\newtheorem{proposition}{Proposition}
\newtheorem{theorem}{Theorem}
\numberwithin{equation}{section}
\begin{document}
\title[Equivalence of rational links and 2-bridge links revisited]{Equivalence of rational links and 2-bridge links revisited}
\author{M. M. Toro}
\curraddr{Universidad Nacional de Colombia, Sede Medell\'{\i}n}
\email{mmtoro@unalmed.edu.co}
\thanks{Facultad de Ciencias, Universidad Nacional de Colombia}
\date{June 2014}
\subjclass[2000]{Primary 57M25, 57M27 }
\keywords{Rational links, 2-bridge links, Conway presentation, continued
fraction, 2-tangles, Schubert form, link diagram.}

\begin{abstract}
In this paper we give a simple proof of the equivalence
between the rational link associated to the continued fraction $\left[
a_{1},a_{2},\cdots a_{m}\right] ,$ $a_{i}\in\mathbb{N}$, and the two bridge
link of type $p/q,$ where $p/q$ is the rational given by $\left[ a_{1}%
,a_{2},\cdots a_{m}\right]  $. The known proof of this equivalence relies on
the two fold cover of a link and the classification of the lens spaces. Our
proof is elementary and combinatorial and follows the naive approach of
finding a set of movements to transform the rational link given by $\left[
a_{1},a_{2},\cdots a_{m}\right]  $ into the two bridge link of type $p/q$.
\end{abstract}
\maketitle
\section{Introduction}	%) A SECTION HEADING
The equivalence between the two bridge link of type $p/q$, introduced and
classified by Schubert \cite{Sc}, and the rational links associated to a
continued fraction $\left[  a_{1},a_{2},\cdots a_{m}\right]  $, introduced by
Conway \cite{Co}, is one example of the beautiful relations between knot
theory and other mathematical subjects, in this case number theory. In an
elementary course of knot theory, this relation captures the students
attention and imagination, but the known proof requires advanced techniques
from 3-manifold theory that are out of reach at that level. For this reason,
we seek an elementary proof, that follows the naive approach of finding an
algorithm to change one of the diagrams into the other.

In this paper we will transform the Conway diagram $C$ of a rational link,
associated to a continued fraction $\left[  a_{1},a_{2},\cdots a_{m}\right]
$, with $a_{i}\in\mathbb{N}^{+},$ into a diagram $S\ $of the two bridge link
of type $p/q$, where $p/q$ is the rational given by the continued fraction
$\left[  a_{1},a_{2},\cdots a_{m}\right]  $. In this way we give an elementary
proof of the equivalence between the rational link $\left[  a_{1},a_{2},\cdots
a_{w}\right]  $ and the two bridge link of type $p/q.$ This equivalence can be
found in \cite{BuZi}, but this proof requires to consider the two fold cover
of the link and the classification of the lens spaces. Our proof differs
completely of this approach, and instead, use the direct method of finding a
set of moves, that can be described in a recursive way as a sequence of steps.
In each step of the transformation process we are able to describe precisely
the changes in the diagram and to produce a sequence of integers that keep
track of the changes and will allow us to confirm that at the end of the
transformation we obtain the diagram of the 2-bridge link of the right type.
The result of this paper together with the results in \cite{kl} and
\cite{klt}, the complete classification of rational links and two bridge links
is completed, without requiring advanced techniques of three dimensional topology.

The role of knot theory as a didactic tool, not only in undergraduate courses
but also as a subject to develop mathematical awareness in high school
students, requires some effort to present part of the theory in an appropriate
level. This elementary proof of the equivalence of the two families of links
follows this approach.

The technique used in the transformation of a rational diagram into a 2-bridge
diagram can be implemented in the transformation of a link diagram given by a
6-plat into a 3-bridge diagram given by the Schubert presentation as a triple
$\left(  p/n,q/m,s/l\right)  $, where $p,n,q,m,s,l$ are integers, see
\cite{HMTT}. This is a subject of current research.

In section 1 we describe the diagrams of rational links and 2-bridge links and
fix some notation. In section 2 we introduce the transformation process and
establish the main result. Section 3 has a collection of technical results, of
combinatorial and computational nature, that will be used in Section 4 to
prove the main result.

\section{Diagrams and notation}

Let us consider the continued fraction $\left[  a_{1},a_{2},\cdots
a_{m}\right]  $, with $a_{i}$ a positive integer, $1\leq i\leq m.$ Figure
\ref{fig1} shows the diagrams $C=C\left[  a_{1},a_{2},\cdots a_{m}\right]  $
of rational links associated to a continued fraction $\left[  a_{1}%
,a_{2},\cdots a_{m}\right]  $, where the tangles in even positions are
positive and the ones in odd positions are negative, see Fig. \ref{fig1} a.%

\begin{figure}[h]%
\centering
\includegraphics[
height=2.5486in,
width=4.1485in
]%
{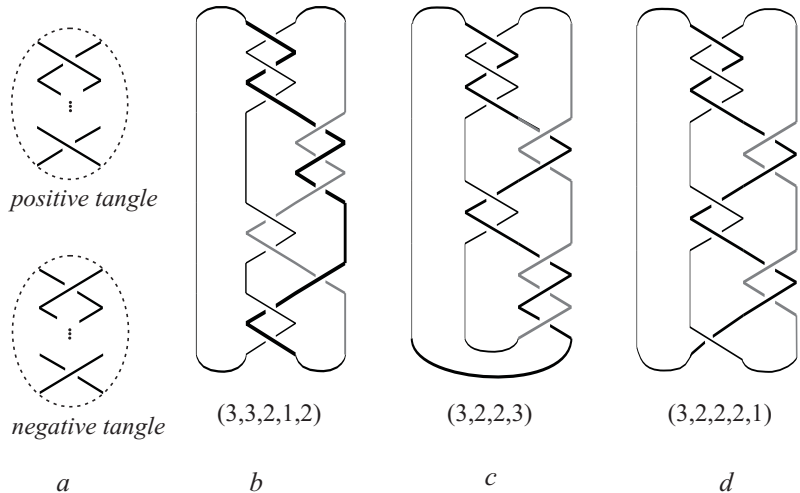}%
\caption{Rational links diagrams, with odd and even number of tangles.}%
\label{fig1}%
\end{figure}
%EndExpansion
The final arcs that close the diagram depend on $m.$ If $m$ is odd, we use the
form shown in Fig. \ref{fig1} b., and if it is even we use the form shown in
Fig. \ref{fig1} c. Usually we always can consider $m$ odd, by a simple
deformation of the diagram, as shown in Fig. \ref{fig1} d., see \cite{kl}, but
in our work we need to consider both diagram types. Note that our convention
for rational links follows \cite{Mu}, \cite{Gr} and \cite{kl} and differs from
the standard one given in \cite{BuZi}, \cite{Co}, \cite{Ka} and \cite{Wi}, so
our rational link $C\left[  a_{1},a_{2},\cdots a_{m}\right]  $ corresponds to
the mirror image of the one in \cite{BuZi}.

To a rational number $p/q,$ with $p$ and $q$ relatively primes and $0<q<p,$ we
associate a two bridge diagram $S=S\left(  p/q\right)  $ as shown in Fig.
\ref{fig2}. We call $\alpha$ the bridge to the left and $\beta$ the bridge to
the right. The integer $p$ represents the number of segments in which we
divided each one of the bridges, so the number of crossings under each bridge
is $p-1$. The integer $q$ represents the position of the first crossing of the
bridge $\alpha$ under the bridge $\beta$, as shown in Fig. \ref{fig2} a.,
counting in a clockwise direction. See \cite{Ka} and \cite{Sc} for a more
detailed description of the 2-bridge diagram.%

\begin{figure}[h]%
\centering
\includegraphics[
height=1.4667in,
width=4.9552in
]%
{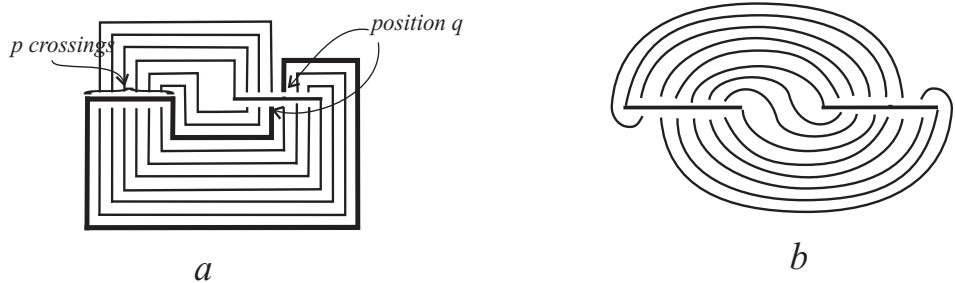}%
\caption{Two bridge link diagrams $p/q.$ Particular case with $p=7$ and
$q=3$.}%
\label{fig2}%
\end{figure}
%EndExpansion
Our 2-bridge link corresponds to the mirror image of the 2-bridge link in
\cite{Sc} and \cite{BuZi}. Usually the standard diagram of a 2-bridge link is
symmetric, as shown in Fig. \ref{fig2} b., but for our purpose we will
consider the bridge $\beta$ as formed by two segments, so the diagram in Fig.
\ref{fig2} a. is more appropriate.

We will describe a process to transform the diagram $C\left[  a_{1}%
,a_{2},\cdots a_{m}\right]  $ into the diagram $S\left(  p/q\right)  $, when
$p/q$ is the rational given by the continued fraction $\left[  a_{1}%
,a_{2},\cdots,a_{m}\right]  $. The process will be defined in a recursive way.
In each step $n,$ with$~1\leq n\leq m,$ where $m$ is the length of the
continued fraction, we construct a sequence of integers $p_{n}$ and $q_{n}$
and we prove that they satisfy the recurrence relation of the convergents of
the continued fraction $\left[  a_{1},a_{2},\cdots,a_{m}\right]  $, see
\cite{Bu},
\begin{equation}
p_{n+1}=a_{n+1}p_{n}+p_{n-1},\ \ \ q_{n+1}=a_{n+1}q_{n}+q_{n-1,}%
\ \ \ \ \ \ n\geq1\text{,} \label{recfraccion}%
\end{equation}
with\ $p_{0}=1,\ p_{1}=a_{1},\ q_{0}=0,q_{1}=1.$

\textbf{Notation:} In all the paper, each $a_{i}$ will be a positive integer.
For a real number $a,$ $\left\lceil a\right\rceil $, the ceiling of $a$, will
be the least integer greater or equal to $a,$ and $\left\lfloor a\right\rfloor
$, the floor of $a$, will be the greatest integer less or equal to $a.$ For an
integer $a$ we define%
\begin{equation}
\mu_{a}=\left\{
\begin{array}
[c]{cc}%
0 & \text{if }a\text{ is even}\\
1 & \text{if }a\text{ is odd.}%
\end{array}
\right.  \label{defmu}%
\end{equation}
We will use also the Kronecker delta $\delta_{lt}$ to indicate $1$ if $l=t$
and $0$ otherwise.

\section{Geometric description of the transformation process}

To transform the diagram $C$ of a rational link into a two bridge diagram $S$,
we take the two bridges as the two superior arcs of diagram $C$. We call
$\alpha$ the bridge to the left and $\beta$ the one to the right. From the two
bridges emerge 4 {\normalsize strings}, that wave to form the tangles that
conforms the rational link. The {\normalsize string} to the right does not
play any role in the waving, so we will consider only 3 {\normalsize strings}
and we take bridge $\alpha$ as formed by {\normalsize string} 1 and divide
bridge $\beta$ into {\normalsize strings} 2 and 3, so we will consider $\beta$
as formed by two independent {\normalsize strings}, see Fig. \ref{fig3} a.

The transformation process will be a sequence of steps, one for each one of
the tangles that form the rational link. In each step we transform the tangle
so that all the crossings will be under the bridges. In each step of our
process each {\normalsize string} will play a different role. Two of them,
that we will call the \textit{guides, }are the ones that form the tangle and
will be modified. The third one will be idle.

\begin{definition}
\label{defguide}We define as \textit{principal guide }in step $n$ the string
that crosses over the other one in the first crossing of the tangle $a_{n},$
and we define as secondary guide the string that crosses under.
S{\normalsize ee Fig. \ref{fig4} a}.\textit{\ }The principal and secondary
guides in step $n$ are denoted by $i_{n}$ and $j_{n}$ respectively. The idle
{\normalsize string is the one }that does not form part of the tangle $a_{n}$
and is denoted $k_{n}.$
\end{definition}

To simplify notation we will drop the subscript and\ use only $i,j$ and $k$
when there is no place to confusion.

In order to count the number of crossings under each bridge we will keep track
of the {\normalsize strings} that form the crossings and we will count the
number of times that {\normalsize string} $r$ crosses under
{\normalsize string} $t,$ $1\leq r,t\leq3.$ This information is stored in a
data matrix $D_{n}=\left[  d_{rt}\left(  n\right)  \right]  _{r=1,3}^{t=1,3},$
where $d_{rt}\left(  n\right)  $ represents the times that
{\normalsize string} $r$ crosses under {\normalsize string} $t$. In each step
of our process we transform the data matrix $D_{n}$ into the matrix $D_{n+1}$.

Let us take the rational diagram $C$ described by the continued fraction
$\left[  a_{1},\cdots,a_{m}\right]  $. We start the process by taking
\begin{equation}
D_{0}=\left[  0\right]  \label{d0}%
\end{equation}
and describe a recurrence process in order to change $C$ into $S$.

\subsection{Step One}

In the first step we will unravel the first tangle $a_{1}$ as shown in Fig.
\ref{fig3} a. The tangle is formed by the 1 and 2 {\normalsize strings}, with
1 as the principal guide, 2 as the secondary guide and $3$ as the idle
{\normalsize string}. The movement changes the tangle with $a_{1}$ positive
crossings into a 2-tangle in which there are only crossings under the bridges.
The movement produces $a_{1}$ crossings under {\normalsize string}$\ 1$,
$\left\lfloor a_{1}/2\right\rfloor $ of them formed by undercrossings of
{\normalsize string} $1$ and $\left\lceil a_{1}/2\right\rceil $ corresponding
to crossings of {\normalsize string} $2$. Under {\normalsize string} $2$ we
will have $a_{1}-1\ $crossings, $\left\lceil \left(  a_{1}-1\right)
/2\right\rceil $ of them formed by {\normalsize string} $1$ and $\left\lfloor
\left(  a_{1}-1\right)  /2\right\rfloor $ by {\normalsize string }%
$2$.{\normalsize

\begin{figure}[h]%
\centering
\includegraphics[
height=1.254in,
width=4.6307in
]%
{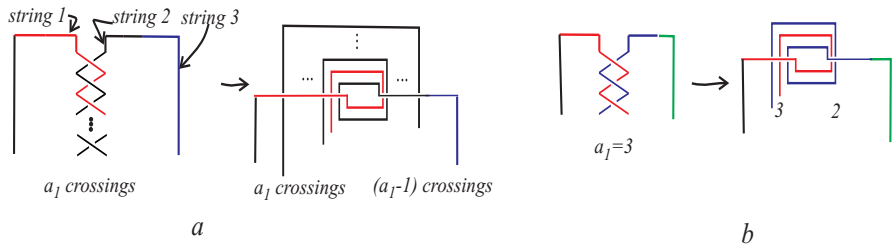}%
\caption{Step one, only {\protect\normalsize strings} 1 and 2 are used. Fig. b
shows the case $a_{1}=3$.}%
\label{fig3}%
\end{figure}
%EndExpansion
}

Therefore, we have the following matrix to describe step one%
\[
D_{1}=\left[  d_{rt}\left(  1\right)  \right]  _{r=1,3}^{t=1,3}=%
\begin{bmatrix}
\left\lfloor \frac{a_{1}}{2}\right\rfloor \medskip & \left\lceil \frac
{a_{1}-1}{2}\right\rceil  & 0\\
\left\lceil \frac{a_{1}}{2}\right\rceil \medskip & \left\lfloor \frac{a_{1}%
-1}{2}\right\rfloor  & 0\\
0 & 0 & 0
\end{bmatrix}
.
\]

The total number of crossings under {\normalsize string} $t$ is given by the
sum $d_{1t}+d_{2t}+d_{3t},$ so these are the numbers that allow us to describe
the 2-bridge link produced at the end of the process.

\begin{definition}
\label{defst}For $t\in\{1,2,3\}$ we define $s_{n,t}$ as the sum of the
$t\ $column of matrix $D_{n}$, i.e., $s_{n,t}=d_{1t}+d_{2t}+d_{3t}$.
\end{definition}

\subsection{Step n+1}

Let us suppose that we have changed the tangles $a_{1},\cdots,a_{n}$ and the
information is stored in $D_{n}$.

In step $n+1$ we change the tangle $a_{n+1}$ into a rational tangle, in the
same way as we proceeded in step one. We have $a_{n+1}$ crossings under the
principal guide and $a_{n+1}-1$ under the secondary guide, see Fig.
{\normalsize \ref{fig4} b. }We move all the crossings, forming a parallel set
of {\normalsize strings}, followings the direction of each of the guides,
ending the movement when we reach the initial points of the guides. In this
way all the crossings are under the bridges, see Fig. {\normalsize \ref{fig4}.
}Figure \ref{fig6} shows the transformation process of $C[1,2,2].$ In Fig.
\ref{fig6}b. we made the first step and transform the tangle 1. In Fig.
\ref{fig6}c. we change the tangle 2 and in Fig. \ref{fig6}d. we move all the
crossing, following the guides.%

\begin{figure}[h]%
\centering
\includegraphics[
height=1.5446in,
width=4.734in
]%
{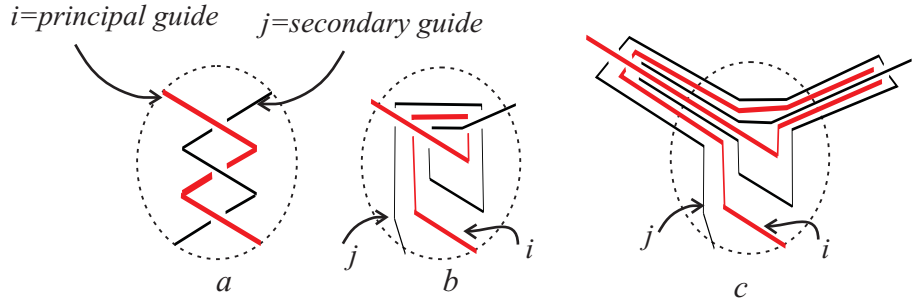}%
\caption{Chages in step n+1.}%
\label{fig4}%
\end{figure}
%EndExpansion

Now, the important part is to keep track of the number of crossings we have at
the end of step $n+1$ and to find the new guides.

To simplify notation, take $a=$ $a_{n+1}$ and let $D=D_{n}$ $=\left[
d_{rt}\right]  \ $and $D^{\prime}=D_{n+1}$ $=\left[  d_{rt}^{\prime}\right]  $
defined below. Let us find the values of $d_{rt}^{\prime}$ in terms of
$d_{rt}$ and $a$, for $1\leq r,t\leq3.$

Suppose that in step$\ n+1$the guides are $i=i_{n+1}\ $and$\ j=j_{n+1}$ and
the idle {\normalsize string\ is }$k=k_{n+1}$. As the {\normalsize string} $k$
is idle, the values of $d_{kt},$ $1\leq t\leq3,$ corresponding to
undercrossings of {\normalsize string} $k,$ do not change, so $d_{kt}^{\prime
}=d_{kt},\ $for\ $1\leq t\leq3$.

Each time that {\normalsize string} $i$ crosses under {\normalsize string}
$t$, there are $2a$ new crossings under that {\normalsize string},
$2\left\lfloor a/2\right\rfloor $ of which are formed by {\normalsize string}
$i$, or principal guide and $2\left\lceil a/2\right\rceil $ are formed by
{\normalsize string} $j$, or secondary guide, see Fig. \ref{fig4} c.

Each time that {\normalsize string} $j$ crosses under {\normalsize string}
$t$, there are $2\left(  a-1\right)  $ new crossings under
{\normalsize string} $t,$ $2\left\lceil \left(  a-1\right)  /2\right\rceil $
corresponding to crossings of {\normalsize string} $i$ and $2\left\lfloor
\left(  a-1\right)  /2\right\rfloor $ corresponding to crossings of
{\normalsize string} $j$. See Fig. {\normalsize \ref{fig4} }c.

Besides these new crossings, when we arrive to the end of {\normalsize string}
$i$, we have $a$ additional crossings, $\left\lfloor a/2\right\rfloor $
corresponding to undercrossings of {\normalsize string} $i$ and $\left\lceil
a/2\right\rceil $ corresponding to undercrossings of {\normalsize string}
$\dot{j}$. At the end of {\normalsize string} $j,$ there are $a-1\ $additional
crossings, $\left\lceil \left(  a-1\right)  /2\right\rceil $ corresponding to
{\normalsize string} $i$ and $\left\lfloor \left(  a-1\right)  /2\right\rfloor
$ corresponding to {\normalsize string} $j$.

So at the end of step $n+1\ $we have
\begin{align}
d_{kt}^{\prime} &  =d_{kt},\ \ \text{for }1\leq t\leq3,\nonumber\\
d_{ik}^{\prime} &  =\left(  1+2\left\lfloor \frac{a}{2}\right\rfloor \right)
d_{ik}+2\left\lceil \frac{a-1}{2}\right\rceil d_{jk},\nonumber\\
d_{ij}^{\prime} &  =\left(  1+2\left\lfloor \frac{a}{2}\right\rfloor \right)
d_{ij}+2\left\lceil \frac{a-1}{2}\right\rceil d_{jj}+\left\lceil \frac{a-1}%
{2}\right\rceil ,\nonumber\\
d_{ii}^{\prime} &  =\left(  1+2\left\lfloor \frac{a}{2}\right\rfloor \right)
d_{ii}+2\left\lceil \frac{a-1}{2}\right\rceil d_{ji}+\left\lfloor \frac{a}%
{2}\right\rfloor ,\nonumber\\
d_{jk}^{\prime} &  =\left(  1+2\left\lfloor \frac{a-1}{2}\right\rfloor
\right)  d_{jk}+2\left\lceil \frac{a}{2}\right\rceil d_{ik},\nonumber\\
d_{jj}^{\prime} &  =\left(  1+2\left\lfloor \frac{a-1}{2}\right\rfloor
\right)  d_{jj}+2\left\lceil \frac{a}{2}\right\rceil d_{ij}+\left\lfloor
\frac{a-1}{2}\right\rfloor ,\nonumber\\
d_{ji}^{\prime} &  =\left(  1+2\left\lfloor \frac{a-1}{2}\right\rfloor
\right)  d_{ji}+2\left\lceil \frac{a}{2}\right\rceil d_{ii}+\left\lceil
\frac{a}{2}\right\rceil ,\label{reldprima}%
\end{align}
where $\left\{  i,j\right\}  \ $are the guides\ and $k$ is the idle
{\normalsize string}.

The process just described can be written as a recurrence relation in terms of
matrices, as we will describe in Theorem \ref{teorec}.

For the example $C\left[  1,2,2\right]  $ shown in Fig. \ref{fig6}, we have
\[
D_{1}=%
\begin{bmatrix}
0 & 0 & 0\\
1 & 0 & 0\\
0 & 0 & 0
\end{bmatrix}
,\ \
\begin{bmatrix}
s_{1,1}\\
s_{1,2}\\
s_{1,3}%
\end{bmatrix}
=%
\begin{bmatrix}
1\\
0\\
0
\end{bmatrix}
\ \ \ \ \ \ \ D_{2}=%
\begin{bmatrix}
0 & 0 & 1\\
1 & 0 & 0\\
1 & 0 & 1
\end{bmatrix}
,\ \
\begin{bmatrix}
s_{2,1}\\
s_{2,2}\\
s_{2,3}%
\end{bmatrix}
=%
\begin{bmatrix}
2\\
0\\
2
\end{bmatrix}
\
\]

\subsection{Final step}

At the end of step $m$ we have transformed all the tangles and we have
produced a 2-bridge link. When $m$ is even, we reach a reduced 2-bridge
diagram but when $m$ is odd, we reach a no reduced diagram with a kink formed
by bridge $\alpha$, that when simplified by a type I Reidemeister move
produces a reduced 2-bridge diagram, see Fig. \ref{fig5}.%

\begin{figure}[ptb]%
\centering
\includegraphics[
height=1.0776in,
width=5.2598in
]%
{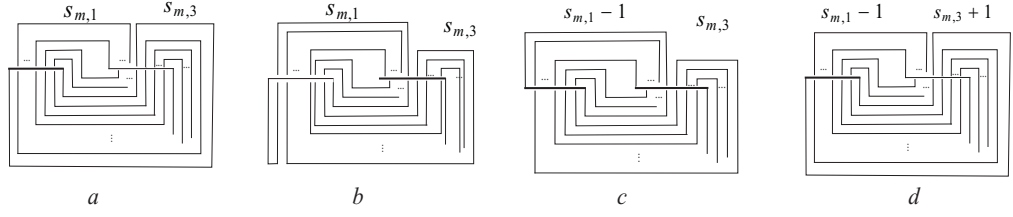}%
\caption{Final step. Figure a is for the case $m$ even. Figures b, c and d are
for $m$ odd.}%
\label{fig5}%
\end{figure}
%EndExpansion

Now we interpret the meaning of the values $s_{m,1}$ and $s_{m,3}$ in the
description of the final diagram.

When $m$ is even, see Fig. \ref{fig5} a., $s_{m,1}$ is the number of crossings
under bridge $\alpha$ and $s_{m,3}$ is the number of crossings under
{\normalsize string} $3,$ so the position of the crossing under which the
bridge $\alpha$ crosses bridge $\beta$ is $s_{m,3},$ therefore we obtain the
2-bridge link of type
\[
\left(  s_{m,1}+1\right)  /s_{3,m}.
\]
When $m$ is odd, see Fig. \ref{fig5} b., there is a kink formed by the bridge
$\alpha$ and after the simplification by type I Reidemeister move, see Fig.
\ref{fig5} c., the number of crossings under bridge $\alpha$ decreased by one
but the position of the crossing under which the bridge $\alpha$ crosses
bridge $\beta$ increases by one, see Fig. \ref{fig5} d., and now it is
$s_{m,3}+1$, so we obtain the 2-bridge link%
\[
s_{m,1}/\left(  s_{3,m}+1\right)  .
\]

In Fig. \ref{fig6} e. to f. we show the final step in the transformation of
$C[1,2,2].$ In Fig. \ref{fig6} e. we change the tangle 2 and in Fig.
\ref{fig6} f. we move all the crossings following the guides. In Fig.
\ref{fig6} g. it is clear the kink that is simplified in Fig. \ref{fig6} h.%

\begin{figure}[h]%
\centering
\includegraphics[
height=2.7717in,
width=3.7343in
]%
{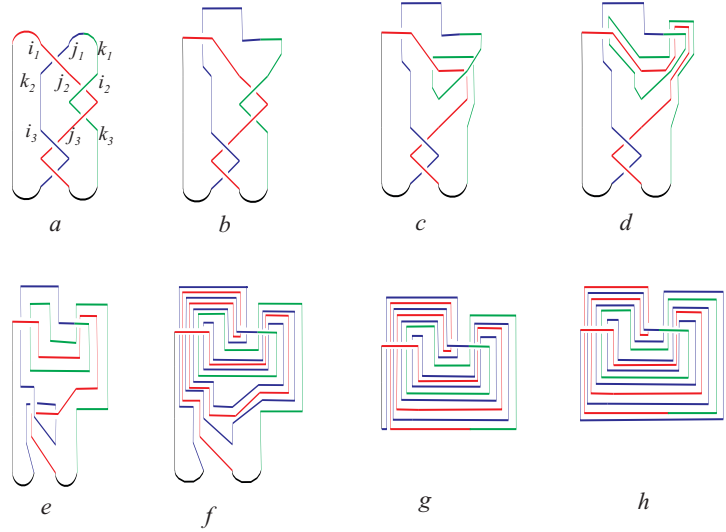}%
\caption{ Transformation of $C\left[  1,2,2\right]  $ into the 2-bridge knot
$7/5$.}%
\label{fig6}%
\end{figure}
%EndExpansion
For the example $C\left[  1,2,2\right]  $ we have
\[
\ D_{3}=%
\begin{bmatrix}
2 & 1 & 1\\
4 & 1 & 2\\
1 & 0 & 1
\end{bmatrix}
,\ \
\begin{bmatrix}
s_{3,1}\\
s_{3,2}\\
s_{3,3}%
\end{bmatrix}
=%
\begin{bmatrix}
7\\
2\\
4
\end{bmatrix}
\
\]
and after the simplification we get 6 crossings under string 1 and 5 crossings
under string 3, so we have the 2-bridge knot $7/5.$

\subsection{Main result}

In order to have the equivalence between the rational link $C\left[
a_{1},a_{2},\cdots a_{m}\right]  $ and the 2-bridge link $p/q,$ where $p/q$ is
the rational given by $\left[  a_{1},a_{2},\cdots a_{m}\right]  $, we need to
prove that the diagram obtained in the final step is in fact the diagram of
the 2-bridge link $p/q$.

\begin{definition}
\label{defpyq}Define $p_{n}=s_{n,1}+1-\mu_{n}$ and $q_{n}=s_{n,3}+\mu_{n},$
where $\mu_{n}$ was defined in (\ref{defmu}).
\end{definition}

So we need to prove that
\[
p_{m}/q_{m}=p/q,
\]
and we do so by proving the following theorem:

\begin{theorem}
[Main Result]Given the continued fraction $\left[  a_{1},a_{2},\cdots
a_{m}\right]  $ that represents the rational $p/q,$ the integers $p_{n}$ and
$q_{n}$ defined above satisfy the recurrence relations
\begin{align*}
p_{n+1}  &  =a_{n+1}p_{n}+p_{n-1},\ \text{for\ }1\leq n,\ \ p_{0}%
=1,\ p_{1}=a_{1},\\
q_{n+1}  &  =a_{n+1}q_{n}+q_{n-1},\ \text{for\ }1\leq n,\ \ q_{0}=0,q_{1}=1.
\end{align*}

Therefore $p_{n}/q_{n}$ is the $n$-convergent of the continued fraction
$\left[  a_{1},\cdots,a_{m}\right]  $ and so
\[
p_{m}/q_{m}=p/q\text{,}%
\]
then the rational link $C\left[  a_{1},a_{2},\cdots a_{m}\right]  $ is
equivalent to the 2-bridge link $p/q$.
\end{theorem}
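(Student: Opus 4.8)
The plan is to prove the recurrences $p_{n+1} = a_{n+1}p_n + p_{n-1}$ and $q_{n+1} = a_{n+1}q_n + q_{n-1}$ by induction on $n$, using the matrix recurrence of Theorem~\ref{teorec} together with the explicit formulas \eqref{reldprima} for the entries $d'_{rt}$ in terms of $d_{rt}$ and $a = a_{n+1}$. The key observation is that everything we need is encoded in the three column sums $s_{n,t} = d_{1t}+d_{2t}+d_{3t}$, $t \in \{1,2,3\}$, since $p_n$ and $q_n$ are defined (Definition~\ref{defpyq}) purely in terms of $s_{n,1}$ and $s_{n,3}$. So the first step is to sum \eqref{reldprima} over $r$ to get clean recurrences for the $s_{n,t}$ alone: adding the three equations that update column $t$ should collapse the ceiling/floor coefficients. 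Using $2\lfloor a/2 \rfloor + 2\lceil a/2\rceil = 2a$ and $1 + 2\lfloor a/2\rfloor + 2\lceil (a-1)/2 \rceil = 2a$ (and similarly $1 + 2\lfloor (a-1)/2\rfloor + 2\lceil a/2\rceil = 2a$), one expects something like
\begin{align*}
s'_{i} &= 2a\, s_{i} + 2a\cdot(\text{something involving } s_j) + (\text{constant terms from the ends of the guides}),\\
s'_{j} &= (\text{analogous}),\\
s'_{k} &= s_{k},
\end{align*}
where $i = i_{n+1}$, $j = j_{n+1}$, $k = k_{n+1}$. The first real task is to carry out this summation carefully and record the resulting three-term recurrence for the vector $(s_{n,1}, s_{n,2}, s_{n,3})$, keeping track of the additive constants $\lfloor a/2\rfloor + \lceil (a-1)/2\rceil = a-1$ coming from the $d'_{ii}, d'_{ij}$ terms and $\lceil a/2\rceil + \lfloor (a-1)/2\rfloor = a-1$ from the $d'_{ji}, d'_{jj}$ terms, and noting that the factor $2$ in front of all the "new crossings" terms versus the absent factor $2$ in front of the end-of-guide terms is what will make the parity corrections $\mu_n$ work out.

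The second, and I expect hardest, step is bookkeeping the guides: which string is the principal guide $i_{n+1}$, which is the secondary guide $j_{n+1}$, and which is idle $k_{n+1}$ changes from step to step, and the pattern of how $\{i,j,k\}$ cycles is exactly what ties the sum-recurrence above to the clean recurrence \eqref{recfraccion}. I would prove a lemma (presumably among the "technical results" of Section~3) pinning down the sequence of triples $(i_n, j_n, k_n)$ — most likely it is eventually periodic with period related to alternation of tangle parity — and then, relative to a fixed labelling, show that $s_{n,1}$ (crossings under bridge $\alpha$) and $s_{n,3}$ (crossings under string $3$) play the roles that $p_n$ and $q_n$ are supposed to play. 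The role of $s_{n,2}$ is auxiliary: it should drop out or be expressible via $s_{n,1}$ and $s_{n,3}$, and identifying the right invariant relating the three column sums is the crux. Concretely, I anticipate needing an identity such as $s_{n,1} = s_{n,2} + s_{n,3} + (\text{parity term})$ or $s_{n,1}$ equals $s_{n-1,\bullet}$-data shifted, which would let me reduce the three-term vector recurrence to the scalar two-term recurrences for $p_n$ and $q_n$.

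With those two ingredients in hand the endgame is routine: substitute $s_{n,1} = p_n - 1 + \mu_n$ and $s_{n,3} = q_n - \mu_n$ into the sum-recurrence, use $\mu_{n+1} = 1 - \mu_n$ is \emph{not} quite right in general (the parity of $n$ alternates, so $\mu_{n+1} \ne \mu_n$ always, giving $\mu_n + \mu_{n+1} = 1$) to absorb all the $\pm 1$ and $a-1$ constants, and check that what remains is precisely $p_{n+1} = a_{n+1}p_n + p_{n-1}$ and $q_{n+1} = a_{n+1}q_n + q_{n-1}$. The base cases $n=0,1$ are verified directly: $D_0 = [0]$ gives $s_{0,\bullet} = 0$, so $p_0 = 1$, $q_0 = 0$; and the matrix $D_1$ computed in Step One gives $s_{1,1} = \lfloor a_1/2\rfloor + \lceil a_1/2\rceil = a_1$ and $s_{1,3} = 0$, hence $p_1 = a_1 + 1 - \mu_1$ — and here one must check the case split on the parity of $a_1$ matches $p_1 = a_1$, i.e. that the $\mu$-correction at $n=1$ is $\mu_1 = 1$ exactly when needed, which is the kind of low-level consistency check that the Section~4 proof will have to get exactly right. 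Once the recurrences are established, the identification $p_m/q_m = p/q$ is immediate from the standard theory of continued fraction convergents \cite{Bu}, and the geometric discussion of the final step (even $m$: type $(s_{m,1}+1)/s_{m,3}$; odd $m$: type $s_{m,1}/(s_{m,3}+1)$, after the Reidemeister~I move) shows this rational is exactly the type of the 2-bridge link produced, completing the equivalence.
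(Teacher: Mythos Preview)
Your plan is essentially the paper's own approach: Proposition~\ref{propsum} is exactly the column-sum recurrence you propose to derive from \eqref{reldprima}, Lemma~\ref{lemdk} supplies the invariant relating $d_{it},d_{jt},d_{kt}$ that you anticipate, Lemma~\ref{lemsnmenos1} identifies the leftover term as $s_{n-1,t}$, and the substitution $s_{n,1}=p_n-1+\mu_n$, $s_{n,3}=q_n-\mu_n$ then finishes exactly as you outline. Two slips to correct before executing: (i) it is the \emph{row} $k$ that is unchanged ($d'_{kt}=d_{kt}$), not the column, so $s'_{k}\neq s_{k}$ in general and the sum recurrence is the uniform $s_{n+1,t}=(1+2a)d_{it}+(2a-1)d_{jt}+d_{kt}+a\delta_{it}+(a-1)\delta_{jt}$ for every $t$; (ii) $\mu_n$ depends on the parity of the step index $n$, not of $a_n$, so $\mu_1=1$ unconditionally and $p_1=s_{1,1}+1-\mu_1=a_1$ with no case split on $a_1$.
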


The proof of this theorem is the content of the following sections.

\section{Algorithmic description of the transformation process}

In order to describe the transformation process the guides play a fundamental
role. To each step $n\ $we assign a permutation $\sigma_{n}\in$ $S_{3}$ that
keep track of the guides.

\begin{definition}
We define a permutation of $\left\{  1,2,3\right\}  $ by
\begin{equation}
\sigma_{n}\left(  1\right)  =i_{n},\sigma_{n}\left(  2\right)  =j_{n}%
=,\sigma_{n}\left(  3\right)  =k_{n},
\end{equation}
where $i_{n},j_{n}$ and $k_{n}$ were defined in Definition \ref{defguide},
$1\leq n\leq m.$
\end{definition}

\begin{lemma}
The permutation $\sigma_{n}$ satisfies the following recurrence relation%
\begin{equation}
\sigma_{n+1}=\left\{
\begin{array}
[c]{ll}%
\sigma_{n}\left(  1\ 3\right)  ,\ \ \ \ \ \ \  & \text{if }a_{n}\ \text{is
even,}\\
\sigma_{n}\left(  1\ 3\ 2\right)  , & \text{if }a_{n}\ \text{is odd,}%
\end{array}
\right.  \ \ \ \text{for }n>1\text{ and }\sigma_{1}=Id.
\end{equation}

\end{lemma}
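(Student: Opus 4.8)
The plan is to track how the roles of the three strings change as we pass from step $n$ to step $n+1$, and to show this role-change is governed by the transposition or $3$-cycle indicated in the statement. Recall that in step $n+1$ we unravel the tangle $a_{n+1}$, whose first crossing has principal guide $i_{n+1}$ crossing over secondary guide $j_{n+1}$, while $k_{n+1}$ is idle. Since $\sigma_{n+1}$ is defined by $\sigma_{n+1}(1)=i_{n+1}$, $\sigma_{n+1}(2)=j_{n+1}$, $\sigma_{n+1}(3)=k_{n+1}$, the claim is equivalent to identifying $(i_{n+1},j_{n+1},k_{n+1})$ as a fixed permutation of $(i_n,j_n,k_n)$, namely $(k_n,j_n,i_n)$ when $a_n$ is even and $(k_n,i_n,j_n)$ when $a_n$ is odd.

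First I would examine the geometry at the transition. In step $n$ the two guides $i_n,j_n$ form the tangle $a_n$; after unraveling, all $a_n$ crossings of that tangle are pushed under the bridges, and the two strands that emerge on the right to meet the next tangle $a_{n+1}$ are the idle string $k_n$ together with one of the two former guides. The key observation is a parity one: each crossing in the tangle swaps which of the two guide-strands is on top, so after $a_n$ crossings the strand that ends up adjacent to $k_n$ (and hence becomes a guide for $a_{n+1}$) is $i_n$ if $a_n$ is even and $j_n$ if $a_n$ is odd. Moreover the former idle string $k_n$ always becomes the principal guide of the new tangle, because in the rational diagram the new tangle $a_{n+1}$ has its crossings oriented so that the string coming from the ``outer'' position — which is $k_n$ — lies on top in its first crossing. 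I would verify this by reading off Figures \ref{fig1}, \ref{fig3} and \ref{fig4}: in Fig. \ref{fig3}a, string $3$ is idle in step $1$ and indeed becomes principal guide $i_2$ in step $2$, matching $\sigma_2=\sigma_1(1\,3)$ or $\sigma_1(1\,3\,2)$ depending on $\mu_{a_1}$.

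Putting these together: $i_{n+1}=k_n$ always; $j_{n+1}$ is the old principal or secondary guide according to the parity of $a_n$ — it is $i_n$ when $a_n$ is even (so $k_{n+1}=j_n$) and it is $j_n$ when $a_n$ is odd (so $k_{n+1}=i_n$). Rewriting in terms of $\sigma_n$: when $a_n$ is even, $\sigma_{n+1}=(\sigma_n(3),\sigma_n(1),\sigma_n(2))$ as a list, which is exactly $\sigma_n\circ(1\,3\,2)$ — wait, one must be careful with composition order; I would pin it down by checking that $\sigma_{n+1}(1)=\sigma_n(3)$, $\sigma_{n+1}(2)=\sigma_n(1)$, $\sigma_{n+1}(3)=\sigma_n(2)$ forces $\sigma_{n+1}=\sigma_n\tau$ with $\tau=(1\,3\,2)$, and similarly the odd case gives $\sigma_{n+1}(1)=\sigma_n(3)$, $\sigma_{n+1}(2)=\sigma_n(2)$, $\sigma_{n+1}(3)=\sigma_n(1)$, i.e. $\tau=(1\,3)$. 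I suspect the statement as printed has the even and odd cases interchanged relative to this computation, or uses the opposite composition convention; reconciling the stated cycle assignment with the geometry is the one genuine subtlety, and I would resolve it by forcing consistency with the base case $\sigma_1=\mathrm{Id}$ and the worked example $C[1,2,2]$, where $a_1=1$ is odd and the data matrices $D_1,D_2,D_3$ in the excerpt already encode which strings are guides at each step. The base case $\sigma_1=\mathrm{Id}$ is immediate from Step One, where strings $1,2,3$ are principal guide, secondary guide, idle respectively. With the transition rule established for general $n$, induction completes the proof.
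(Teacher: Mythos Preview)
Your approach---tracking the geometric roles of the three strings across the transition from tangle $a_n$ to tangle $a_{n+1}$---is exactly the paper's approach; the paper's own proof is simply ``Just consider the diagrams in Fig.~\ref{fig7}.'' So there is nothing to compare methodologically.

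However, your execution has a genuine error: you have the parity cases swapped. You claim that when $a_n$ is even the new secondary guide is $j_{n+1}=i_n$ and when $a_n$ is odd it is $j_{n+1}=j_n$, and you then (correctly) notice that this contradicts the lemma as stated. But the lemma is right and your geometric reading is wrong. The paper's Corollary~\ref{reglaguias} records the correct transition: for $a_n$ even one has $j_{n+1}=j_n$, $k_{n+1}=i_n$; for $a_n$ odd one has $j_{n+1}=i_n$, $k_{n+1}=j_n$. You can confirm this directly from the worked example $C[1,2,2]$: there $a_1=1$ is odd, and the matrix $M_2$ used to compute $D_2$ has $i_2=3$, $j_2=1$---i.e.\ $j_2=i_1$, matching the odd rule $\sigma_2=\sigma_1(1\,3\,2)$ and contradicting your odd rule $j_{n+1}=j_n$.

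The slip is in the sentence ``after $a_n$ crossings the strand that ends up adjacent to $k_n$ \ldots\ is $i_n$ if $a_n$ is even and $j_n$ if $a_n$ is odd.'' In fact the principal guide $i_n$ starts in the outer position (it crosses \emph{over} first), so after an even number of swaps it is $i_n$ that remains outer and exits to become idle, leaving $j_n$ to continue as secondary guide; after an odd number it is $j_n$ that exits. Once you correct this, your computation of $\tau$ in each case matches the statement exactly, with the right-composition convention $\sigma_{n+1}=\sigma_n\tau$ meaning $\sigma_{n+1}(x)=\sigma_n(\tau(x))$. Your suspicion that the statement had the cases interchanged is unfounded; rather than deferring the resolution to ``forcing consistency,'' you should carry out that check and let it correct the geometric claim.
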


\begin{proof}
Just consider the diagrams in Fig. \ref{fig7}.%

\begin{figure}[h]%
\centering
\includegraphics[
height=1.375in,
width=4.7072in
]%
{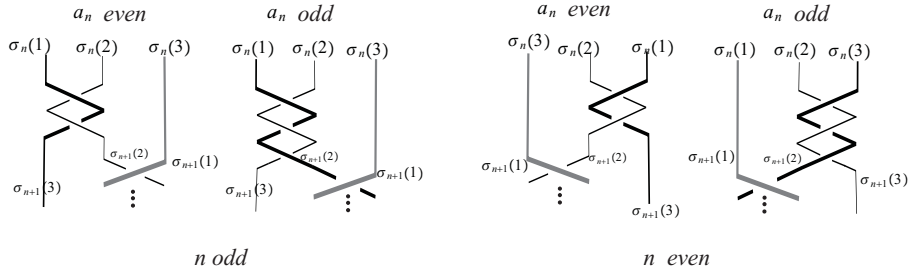}%
\caption{Changes from permutation $\sigma_{n}$ to permutation $\sigma_{n+1}.$}%
\label{fig7}%
\end{figure}
%EndExpansion

\end{proof}

\begin{corollary}
\label{reglaguias}For $n>1$, $i_{n+1}=k_{n}$ and if $a_{n}\ $is even then
$j_{n+1}=j_{n}\ $and $k_{n+1}=i_{n}$ and if $a_{n}\ $is odd then $j_{n+1}=$
$i_{n}$ and $k_{n+1}=j_{n}$.
\end{corollary}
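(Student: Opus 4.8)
The plan is to read off the claim directly from the preceding Lemma by simply evaluating the two permutation recursions at the points $1,2,3$. Recall that by definition $\sigma_n(1)=i_n$, $\sigma_n(2)=j_n$, $\sigma_n(3)=k_n$, so the corollary is nothing more than the Lemma rewritten entrywise. First I would handle the assertion $i_{n+1}=k_n$, which holds in \emph{both} cases: whether we post-compose $\sigma_n$ with the transposition $(1\ 3)$ or with the $3$-cycle $(1\ 3\ 2)$, the image of $1$ is sent to $3$, so $\sigma_{n+1}(1)=\sigma_n(3)=k_n$, giving $i_{n+1}=k_n$. (Here I am using the convention, implicit in the Lemma's notation $\sigma_n(1\ 3)$, that the cycle acts first on the argument and then $\sigma_n$ is applied; one should state this convention explicitly to avoid ambiguity.)

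Next I would split into the two parity cases for $a_n$. If $a_n$ is even, then $\sigma_{n+1}=\sigma_n\circ(1\ 3)$, so $\sigma_{n+1}(2)=\sigma_n(2)=j_n$ and $\sigma_{n+1}(3)=\sigma_n(1)=i_n$, i.e. $j_{n+1}=j_n$ and $k_{n+1}=i_n$. If $a_n$ is odd, then $\sigma_{n+1}=\sigma_n\circ(1\ 3\ 2)$; the cycle $(1\ 3\ 2)$ sends $2\mapsto 1$ and $3\mapsto 2$, hence $\sigma_{n+1}(2)=\sigma_n(1)=i_n$ and $\sigma_{n+1}(3)=\sigma_n(2)=j_n$, i.e. $j_{n+1}=i_n$ and $k_{n+1}=j_n$. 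This exhausts the statement.

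There is no real obstacle here; the content of the result lies entirely in the Lemma, whose proof is the geometric bookkeeping in Figure~\ref{fig7}. The only point requiring a modicum of care is fixing the composition convention for the cyclic notation in the Lemma so that the entrywise evaluation above comes out as claimed — with the opposite convention one would instead post-compose on the left, and the roles of the two nontrivial cases would have to be checked against the figure again. Once that convention is pinned down, the corollary follows by the three short substitutions above.
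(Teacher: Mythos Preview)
Your proposal is correct and is exactly the intended argument: the paper states the Corollary without proof, since it is nothing more than the Lemma evaluated at $1,2,3$ under the definition $\sigma_n(1)=i_n$, $\sigma_n(2)=j_n$, $\sigma_n(3)=k_n$. Your remark about fixing the composition convention is well taken but introduces no discrepancy, as the convention $\sigma_{n+1}=\sigma_n\circ(\text{cycle})$ is the only one consistent with both the Lemma and the Corollary as stated.
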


\begin{definition}
\label{defmatrizm}For each step $n$, with guides $i_{n}=i,j_{n}=j$ and tangle
$a_{n},$ we define the matrix $M_{n}=M\left(  a_{n},i,j\right)  =\left[
m_{rt}\right]  \in M_{3\times3}\left(  \mathbb{N}\right)  $ by
\[
m_{ii}=\left\lfloor a_{n}/2\right\rfloor ,\ m_{ij}=\left\lceil (a_{n}%
-1)/2\right\rceil ,\ m_{ji}=\left\lceil a_{n}/2\right\rceil ,\ m_{jj}%
=\left\lfloor (a_{n}-1)/2\right\rfloor
\]
and zero elsewhere.
\end{definition}

The following lemma is clear and give us an alternative way to compute $M_{n}
$.

\begin{lemma}
The matrix $M_{n}$ can be described as%
\[
M_{n}=P_{\sigma_{n}}^{{}}M\left(  a_{n}\right)  P_{\sigma_{n}}^{-1}.
\]
where $M\left(  a_{n}\right)  =%
\begin{bmatrix}
\left\lfloor a_{n}/2\right\rfloor  & \left\lceil (a_{n}-1)/2\right\rceil  &
0\\
\left\lceil a_{n}/2\right\rceil  & \left\lfloor (a_{n}-1)/2\right\rfloor  &
0\\
0 & 0 & 0
\end{bmatrix}
$ and $P_{\sigma_{n}}$ is the permutation matrix of $\sigma_{n}$.
\end{lemma}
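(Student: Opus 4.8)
The plan is to reduce the identity to the standard fact that conjugation by a permutation matrix permutes the entries of a matrix, then check the four nonzero positions. Write $P_{\sigma}$ for the permutation matrix of $\sigma\in S_{3}$, normalized so that $P_{\sigma}e_{t}=e_{\sigma(t)}$ for the standard basis vectors $e_{1},e_{2},e_{3}$; equivalently $(P_{\sigma})_{rt}=\delta_{r\sigma(t)}$. Since $P_{\sigma}^{-1}=P_{\sigma^{-1}}$, a direct computation gives, for every $3\times3$ matrix $A=[A_{uv}]$,
\[
\left(P_{\sigma}\,A\,P_{\sigma}^{-1}\right)_{rt}=A_{\sigma^{-1}(r),\,\sigma^{-1}(t)},\qquad 1\le r,t\le 3.
\]
In matrix-unit language, writing $E_{uv}$ for the matrix with a $1$ in position $(u,v)$ and zeros elsewhere, this is the statement $P_{\sigma}E_{uv}P_{\sigma}^{-1}=E_{\sigma(u),\sigma(v)}$.

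First I would expand the model matrix $M(a_{n})$ in matrix units. By its definition in the statement,
\[
M(a_{n})=\left\lfloor \tfrac{a_{n}}{2}\right\rfloor E_{11}+\left\lceil \tfrac{a_{n}-1}{2}\right\rceil E_{12}+\left\lceil \tfrac{a_{n}}{2}\right\rceil E_{21}+\left\lfloor \tfrac{a_{n}-1}{2}\right\rfloor E_{22}.
\]
Applying the conjugation formula term by term and using $\sigma_{n}(1)=i_{n}=i$ and $\sigma_{n}(2)=j_{n}=j$ (by definition of $\sigma_{n}$), I get
\[
P_{\sigma_{n}}M(a_{n})P_{\sigma_{n}}^{-1}=\left\lfloor \tfrac{a_{n}}{2}\right\rfloor E_{ii}+\left\lceil \tfrac{a_{n}-1}{2}\right\rceil E_{ij}+\left\lceil \tfrac{a_{n}}{2}\right\rceil E_{ji}+\left\lfloor \tfrac{a_{n}-1}{2}\right\rfloor E_{jj}.
\]
Comparing with Definition \ref{defmatrizm}: the entries in positions $(i,i)$, $(i,j)$, $(j,i)$, $(j,j)$ agree exactly, while every other entry of both sides vanishes — on the right because each matrix unit occurring has both indices in $\{i,j\}$, and on the left by the ``zero elsewhere'' clause of Definition \ref{defmatrizm}. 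Hence $M_{n}=P_{\sigma_{n}}M(a_{n})P_{\sigma_{n}}^{-1}$.

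The computation is entirely routine, which is why the paper calls the lemma clear; there is no real obstacle. The one point deserving care is fixing the normalization of $P_{\sigma}$ — choosing $P_{\sigma}e_{t}=e_{\sigma(t)}$ rather than $e_{\sigma^{-1}(t)}$ — since the opposite convention would replace conjugation by $P_{\sigma_{n}}$ with conjugation by $P_{\sigma_{n}^{-1}}$ and so disagree with the bookkeeping of the guides via $\sigma_{n}$. Conceptually, the identity just records that relabeling the three strings according to $\sigma_{n}$ carries the ``model'' crossing-count contribution $M(a_{n})$, written with respect to strings $1,2,3$, onto the actual one $M_{n}$, written with respect to the strings $i_{n},j_{n},k_{n}$.
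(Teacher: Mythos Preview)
Your proof is correct and is exactly the routine verification the paper has in mind; the paper itself gives no proof, stating only that the lemma ``is clear,'' and your argument via the conjugation formula $P_{\sigma}E_{uv}P_{\sigma}^{-1}=E_{\sigma(u),\sigma(v)}$ together with $\sigma_{n}(1)=i_{n}$, $\sigma_{n}(2)=j_{n}$ spells out precisely why. Your remark on the normalization of $P_{\sigma}$ is well taken, since the paper does not specify it.
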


\begin{theorem}
\label{teorec}The associated matrix $D_{n}$ of the transformation process of
changing the diagram $C$ of the rational link into a 2-bridge diagram
satisfies the recurrence relation%
\begin{equation}
D_{n+1}=2M_{n+1}D_{n}+D_{n}+M_{n+1},\ 0\leq n\leq m \label{recumat}%
\end{equation}
where $M_{n+1}$ is the changing matrix associated to $a_{n+1}$ and to the
permutation $\sigma_{n+1}$, $D_{0}=$ $\left[  0\right]  $ and $\sigma_{1}=Id$.
\end{theorem}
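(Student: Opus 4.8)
The plan is to verify the recurrence \eqref{recumat} by a direct comparison of its entries with the explicit formulas \eqref{reldprima}, using the guide-tracking permutation $\sigma_{n+1}$ to identify which rows and columns of $M_{n+1}$ and $D_n$ are active. First I would fix a step $n$ and write $a=a_{n+1}$, $D=D_n=[d_{rt}]$, and let $i=i_{n+1}$, $j=j_{n+1}$, $k=k_{n+1}$ be the guides and idle string for step $n+1$, so that by Definition \ref{defmatrizm} the matrix $M_{n+1}=[m_{rt}]$ has nonzero entries only $m_{ii}=\lfloor a/2\rfloor$, $m_{ij}=\lceil(a-1)/2\rceil$, $m_{ji}=\lceil a/2\rceil$, $m_{jj}=\lfloor(a-1)/2\rfloor$. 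Then I would compute the $(r,t)$ entry of the right-hand side $2M_{n+1}D_n+D_n+M_{n+1}$ as
\[
\bigl(2M_{n+1}D+D+M_{n+1}\bigr)_{rt}=2\sum_{s}m_{rs}d_{st}+d_{rt}+m_{rt},
\]
and split into the three cases $r=k$, $r=i$, $r=j$, since $M_{n+1}$ has no nonzero entries in the $k$-th row.

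The case $r=k$ is immediate: the sum vanishes and $m_{kt}=0$, so the right-hand side is $d_{kt}$, matching $d'_{kt}=d_{kt}$ in \eqref{reldprima}. For $r=i$, only $s\in\{i,j\}$ contribute to the sum, giving
\[
2\bigl(m_{ii}d_{it}+m_{ij}d_{jt}\bigr)+d_{it}+m_{it}
=\bigl(1+2\lfloor a/2\rfloor\bigr)d_{it}+2\lceil(a-1)/2\rceil d_{jt}+m_{it},
\]
and one checks $m_{it}$ is $0$ when $t=k$, equals $\lceil(a-1)/2\rceil$ when $t=j$, and equals $\lfloor a/2\rfloor$ when $t=i$; these three subcases reproduce exactly the formulas for $d'_{ik}$, $d'_{ij}$, $d'_{ii}$ in \eqref{reldprima}. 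The case $r=j$ is symmetric, with only $s\in\{i,j\}$ contributing and $m_{jt}$ equal to $0$, $\lfloor(a-1)/2\rfloor$, $\lceil a/2\rceil$ according as $t=k,j,i$; these match $d'_{jk}$, $d'_{jj}$, $d'_{ji}$. That exhausts all nine entries, so \eqref{recumat} holds for each $n$ with $1\le n\le m$. For the base case $n=0$, we have $D_0=[0]$ and $\sigma_1=\mathrm{Id}$, so $M_1=M(a_1)$ with $i_1=1$, $j_1=2$, $k_1=3$; the recurrence gives $D_1=2M_1\cdot 0+0+M_1=M_1$, which is precisely the matrix $D_1$ displayed in Step One.

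The only genuine subtlety is bookkeeping: one must be careful that the guide labels $i,j,k$ in \eqref{reldprima} are the guides of step $n+1$ (not step $n$), and that these are precisely the labels encoding the nonzero positions of $M_{n+1}$ via Definition \ref{defmatrizm}, so that ``multiplying by $M_{n+1}$'' and ``applying the geometric transformation of step $n+1$'' pick out the same rows and columns. I expect this alignment of indices — confirming that the abstract matrix product in \eqref{recumat} faithfully encodes the string-by-string count in \eqref{reldprima} — to be the main point to get right; once the indices are matched, each of the nine entry comparisons is a one-line identity in floors and ceilings. No case analysis on the parity of $a$ is needed here, since the parity has already been absorbed into the floor/ceiling entries of $M_{n+1}$; the parity only re-enters later, through the recurrence for $\sigma_n$, when one computes the column sums $s_{n,t}$.
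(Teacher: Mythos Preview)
Your proposal is correct and follows exactly the paper's approach: the paper's proof is the single sentence ``This is just a new way to write the relations given in (\ref{reldprima}),'' and what you have done is spell out that rewriting entry by entry. Your careful index bookkeeping (matching the guides $i,j,k$ of step $n+1$ to the nonzero rows of $M_{n+1}$) is precisely the content the paper leaves implicit.
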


\begin{proof}
This is just a new way to write the relations given in (\ref{reldprima}).
\end{proof}

The recurrence described in Theorem \ref{teorec} is very easy to implement in
a computer, in our case we use \textit{Mathematica.} The following is an
example of the calculation to transform the rational diagram $C\left[
2,3,1,2,3\right]  $.

\begin{example}
\label{exam}Change the rational $C\left[  2,5,4,1\right]  $ into the 2-bridge
$57/26$.
\end{example}

\begin{center}%
\begin{tabular}
[c]{|l|l|l|l|l|l|l|l|}\hline
Step $n$ & $a_{n}$ & $i_{n}$ & $j_{n}$ & $M_{n}$ & $D_{n}$ & $p_{n}$ & $q_{n}%
$\\\hline
$1$ & $2$ & $1$ & $2$ & $\overset{\ \ \ \ }{%
\begin{bmatrix}
1 & 1 & 0\\
1 & 0 & 0\\
0 & 0 & 0
\end{bmatrix}
}_{\ \ _{\ \ }}$ & $%
\begin{bmatrix}
1 & 1 & 0\\
1 & 0 & 0\\
0 & 0 & 0
\end{bmatrix}
_{\ \ }$ & $2$ & $1$\\\hline
$2$ & $5$ & $3$ & $2$ & $\overset{\ \ \ }{%
\begin{bmatrix}
0 & 0 & 0\\
0 & 2 & 3\\
0 & 2 & 2
\end{bmatrix}
}_{\ \ _{\ \ }}$ & $%
\begin{bmatrix}
1 & 1 & 0\\
5 & 2 & 3\\
4 & 2 & 2
\end{bmatrix}
_{\ \ \ }$ & $11$ & $5$\\\hline
$3$ & $4$ & $1$ & $3$ & $\overset{\ \ \ }{%
\begin{bmatrix}
2 & 0 & 2\\
0 & 0 & 0\\
2 & 0 & 1
\end{bmatrix}
}_{\ \ _{\ \ }}$ & $%
\begin{bmatrix}
23 & 13 & 10\\
5 & 2 & 3\\
18 & 10 & 7
\end{bmatrix}
$ & $46$ & $21$\\\hline
$4$ & $1$ & $2$ & $3$ & $\overset{\ \ \ }{%
\begin{bmatrix}
0 & 0 & 0\\
0 & 0 & 0\\
0 & 1 & 0
\end{bmatrix}
_{\ \ _{\ \ }}}$ & $%
\begin{bmatrix}
23 & 13 & 10\\
5 & 2 & 3\\
28 & 15 & 13
\end{bmatrix}
$ & $57$ & $26$\\\hline
\end{tabular}

Table 1
\end{center}

\subsection{Properties of matrix $D_{n}$}

As Example \ref{exam} shows, there are some interesting patterns in the
entries of matrix $D_{n}$. For instance, the column 1 is "almost" the sum of
columns 2 and 3. In this section we will find relations among the entries of
matrix $D_{n}$ and among the entries of $D_{n}$ and $D_{n+1}$. The results of
this section will be used in the proof of the main result. All the results
depend on the guides $i_{n+1},j_{n+1}$ and the parity of $n\ $and$\ a_{n}$.

\begin{proposition}
\label{propsum}Given the rational link $C\left[  a_{1},a_{2},\cdots
,a_{m}\right]  $, the sequence of integers $s_{n,t}$ satisfies the recurrence
relation%
\[
s_{n+1,t}=a_{n+1}s_{n,t}+a_{n+1}\left(  d_{it}+d_{jt}-d_{kt}+\delta
_{it}+\delta_{jt}\right)  +\left(  d_{it}-d_{jt}+d_{kt}-\delta_{jt}\right)  ,
\]
$1\leq n\leq m,1\leq t\leq3,$where $i=i_{n+1},j=j_{n+1},$ $k=k_{n+1}$ and
$\delta_{rt}$ is the Kronecker delta.
\end{proposition}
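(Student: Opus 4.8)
The plan is to derive the claimed recurrence for $s_{n+1,t}$ directly from the matrix recurrence in Theorem~\ref{teorec}, namely $D_{n+1}=2M_{n+1}D_{n}+D_{n}+M_{n+1}$, by summing over the rows (i.e., applying the all-ones row covector $\mathbf{1}^{T}=(1,1,1)$ on the left). Since $s_{n,t}=d_{1t}(n)+d_{2t}(n)+d_{3t}(n)=(\mathbf{1}^{T}D_{n})_{t}$, applying $\mathbf{1}^{T}$ to the recurrence gives $s_{n+1,t}=2(\mathbf{1}^{T}M_{n+1}D_{n})_{t}+s_{n,t}+(\mathbf{1}^{T}M_{n+1})_{t}$, so everything reduces to understanding the row sums of $M_{n+1}$ and of $M_{n+1}D_{n}$.

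First I would compute $\mathbf{1}^{T}M_{n+1}$ using Definition~\ref{defmatrizm}: with $a=a_{n+1}$, $i=i_{n+1}$, $j=j_{n+1}$, the only nonzero entries of $M_{n+1}$ sit in rows $i$ and $j$, and column $i$ has entries $m_{ii}=\lfloor a/2\rfloor$, $m_{ji}=\lceil a/2\rceil$, column $j$ has $m_{ij}=\lceil(a-1)/2\rceil$, $m_{jj}=\lfloor(a-1)/2\rfloor$. Hence the $i$-column sum is $\lfloor a/2\rfloor+\lceil a/2\rceil=a$, the $j$-column sum is $\lceil(a-1)/2\rceil+\lfloor(a-1)/2\rfloor=a-1$, and the $k$-column sum is $0$. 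In compact form, $(\mathbf{1}^{T}M_{n+1})_{t}=a\,\delta_{it}+(a-1)\delta_{jt}=(a-1)(\delta_{it}+\delta_{jt})+\delta_{it}$; this already produces the $+(\dots-\delta_{jt})$ and part of the $a_{n+1}(\dots+\delta_{it}+\delta_{jt})$ pieces once I expand $a=a_{n+1}$.

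Next I would compute the row sums of the product $M_{n+1}D_{n}$. Since $M_{n+1}$ is supported on rows $i,j$, we get $(M_{n+1}D_{n})_{it}=\lfloor a/2\rfloor d_{it}+\lceil(a-1)/2\rceil d_{jt}$ and $(M_{n+1}D_{n})_{jt}=\lceil a/2\rceil d_{it}+\lfloor(a-1)/2\rfloor d_{jt}$, and the $k$-row is zero. Summing the two nonzero rows, the coefficient of $d_{it}$ is $\lfloor a/2\rfloor+\lceil a/2\rceil=a$ and the coefficient of $d_{jt}$ is $\lceil(a-1)/2\rceil+\lfloor(a-1)/2\rfloor=a-1$, so $(\mathbf{1}^{T}M_{n+1}D_{n})_{t}=a\,d_{it}+(a-1)d_{jt}$. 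Substituting into $s_{n+1,t}=2(\mathbf{1}^{T}M_{n+1}D_{n})_{t}+s_{n,t}+(\mathbf{1}^{T}M_{n+1})_{t}$ yields $s_{n+1,t}=2a\,d_{it}+2(a-1)d_{jt}+s_{n,t}+a\,\delta_{it}+(a-1)\delta_{jt}$. Finally I would use $s_{n,t}=d_{it}+d_{jt}+d_{kt}$ (valid because $\{i,j,k\}=\{1,2,3\}$, by Corollary~\ref{reglaguias} or the fact that $\sigma_{n+1}$ is a permutation) to rewrite $2a\,d_{it}+2(a-1)d_{jt}=a(d_{it}+d_{jt}-d_{kt})+(d_{it}-d_{jt}+d_{kt})+a(d_{it}+d_{jt}+d_{kt})-a\,s_{n,t}+\dots$; more directly, collecting the target form one checks $a_{n+1}s_{n,t}+a_{n+1}(d_{it}+d_{jt}-d_{kt})+(d_{it}-d_{jt}+d_{kt})=a\,d_{it}+a\,d_{jt}+a\,d_{kt}+a\,d_{it}+a\,d_{jt}-a\,d_{kt}+d_{it}-d_{jt}+d_{kt}=2a\,d_{it}+2(a-1)d_{jt}+(2a-1)\cdot 0\cdot\dots$, which matches after adding the $s_{n,t}$ term and the Kronecker contributions $a_{n+1}(\delta_{it}+\delta_{jt})-\delta_{jt}=a\,\delta_{it}+(a-1)\delta_{jt}$.

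The only mild obstacle is bookkeeping: one must be careful that $d_{rt}$ in the statement means $d_{rt}(n)$ and that $i,j,k$ are the step-$(n+1)$ guides (not step $n$), and one must invoke that $\{i_{n+1},j_{n+1},k_{n+1}\}=\{1,2,3\}$ to pass between $s_{n,t}$ and the individual entries; the ceiling/floor identities $\lfloor x\rfloor+\lceil x\rceil = \lfloor x\rfloor+\lceil x\rceil$ used above are the elementary $\lfloor a/2\rfloor+\lceil a/2\rceil=a$. No parity case-split on $a_{n+1}$ is actually needed for this proposition — the floors and ceilings recombine cleanly — which is why the statement is uniform in $a_{n+1}$. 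I expect the whole argument to be a half-page of matrix algebra once the row-sum observation is in place.
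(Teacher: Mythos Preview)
Your argument is correct and is essentially the paper's own proof, repackaged: the paper sums the explicit entrywise relations~(\ref{reldprima}) case by case in $t\in\{i,j,k\}$ to reach the same intermediate form $s_{n+1,t}=(1+2a)d_{it}+(2a-1)d_{jt}+d_{kt}+a\delta_{it}+(a-1)\delta_{jt}$ and then regroups exactly as you do, whereas you obtain that expression by hitting the matrix recurrence of Theorem~\ref{teorec} with $\mathbf{1}^{T}$. The only rough spot is your final algebraic check, whose ``$(2a-1)\cdot 0\cdot\dots$'' fragment is garbled; the clean identity you need is $a\,s_{n,t}+a(d_{it}+d_{jt}-d_{kt})+(d_{it}-d_{jt}+d_{kt})=2a\,d_{it}+2(a-1)d_{jt}+s_{n,t}$, which follows immediately from $s_{n,t}=d_{it}+d_{jt}+d_{kt}$.
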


\begin{proof}
Let $D_{n}=\left[  d_{rt}\right]  \ $and $D^{\prime}=D_{n+1}$ $=\left[
d_{rt}^{\prime}\right]  $ the data matrices of steps $n$ and $n+1$ in the
transformation process; besides let $i=i_{n+1},j=j_{n+1}\ $be the guides,
$k=k_{n+1}$ be the idle {\normalsize string}$\ $in step $n+1$ and $a=a_{n+1}$.
By (\ref{reldprima}) we have,%
\begin{align}
s_{n+1,k} &  =d_{ik}^{\prime}+d_{jk}^{\prime}+d_{kk}^{\prime}=\left(
1+2\left\lfloor \frac{a}{2}\right\rfloor +2\left\lceil \frac{a}{2}\right\rceil
\right)  d_{ik}+\\
&  \left(  1+2\left\lceil \frac{a-1}{2}\right\rceil +2\left\lfloor \frac
{a-1}{2}\right\rfloor \right)  d_{jk}+d_{kk},\nonumber\\
s_{n+1,i} &  =d_{ii}^{\prime}+d_{ji}^{\prime}+d_{ki}^{\prime}=\left(
1+2\left\lfloor \frac{a}{2}\right\rfloor +2\left\lceil \frac{a}{2}\right\rceil
\right)  d_{ii}+\nonumber\\
&  \left(  1+2\left\lceil \frac{a-1}{2}\right\rceil +2\left\lfloor \frac
{a-1}{2}\right\rfloor \right)  d_{ji}+d_{ki}+\left\lfloor \frac{a}%
{2}\right\rfloor +\left\lceil \frac{a}{2}\right\rceil ,\nonumber\\
s_{n+1,j} &  =d_{ij}^{\prime}+d_{jj}^{\prime}+d_{kj}^{\prime}=\left(
1+2\left\lfloor \frac{a}{2}\right\rfloor +2\left\lceil \frac{a}{2}\right\rceil
\right)  d_{ij}+\nonumber\\
&  \left(  1+2\left\lceil \frac{a-1}{2}\right\rceil +2\left\lfloor \frac
{a-1}{2}\right\rfloor \right)  d_{jj}+d_{kj}+\left\lceil \frac{a-1}%
{2}\right\rceil +\left\lfloor \frac{a-1}{2}\right\rfloor ,~\nonumber
\end{align}
but as $1+2\left\lfloor \dfrac{a}{2}\right\rfloor +2\left\lceil \dfrac{a}%
{2}\right\rceil =1+2a$ and $\left\lfloor \dfrac{a}{2}\right\rfloor
+\left\lceil \dfrac{a}{2}\right\rceil =a$ we have
\begin{equation}
s_{n+1,t}=\left\{
\begin{array}
[c]{ll}%
\left(  1+2a\right)  d_{ik}+\left(  2a-1\right)  d_{jk}+d_{kk}, & \text{if
}t=k,\\
\left(  1+2a\right)  d_{ii}+\left(  2a-1\right)  d_{ji}+d_{ki}+a, & \text{if
}t=i,\\
\left(  1+2a\right)  d_{ij}+\left(  2a-1\right)  d_{jj}+d_{kj}+a-1, & \text{if
}t=j,
\end{array}
\right.
\end{equation}
therefore%
\begin{align*}
s_{n+1,t} &  =\left(  1+2a\right)  d_{it}+\left(  2a-1\right)  d_{jt}%
+d_{kt}+a\delta_{it}+\left(  a-1\right)  \delta_{jt}\\
&  =a\left(  d_{it}+d_{jt}+d_{kt}\right)  +a\left(  d_{it}+d_{jt}%
-d_{kt}+\delta_{it}+\delta_{jt}\right)  +\left(  d_{it}-d_{jt}+d_{kt}%
-\delta_{jt}\right)  \\
&  =as_{n,t}+a\left(  d_{it}+d_{jt}-d_{kt}+\delta_{it}+\delta_{jt}\right)
+\left(  d_{it}-d_{jt}+d_{kt}-\delta_{jt}\right)  .
\end{align*}

\end{proof}

Now we need to find expressions for $\left(  d_{it}+d_{jt}-d_{kt}+\delta
_{it}+\delta_{jt}\right)  $ and $\left(  d_{it}-d_{jt}+d_{kt}-\delta
_{jt}\right)  $. The following lemmas will take care of finding these
expressions that will be used in proving the main result.

\begin{lemma}
\label{lemdifij}If $i=i_{n+1},j=j_{n+1},k=k_{n+1}$ and $a=a_{n+1},$ then
\begin{equation}
d_{it}^{\prime}-d_{jt}^{\prime}=\left(  -1\right)  ^{\mu_{a}}\left(
d_{it}+d_{jt}+\mu_{a}\delta_{it}+\left(  1-\mu_{a}\right)  \delta_{jt}\right)
,\ \ \text{for }1\leq t\leq3\text{,} \label{ecdifij}%
\end{equation}
where $\mu_{a}$ is given by (\ref{defmu}).
\end{lemma}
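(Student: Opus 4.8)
The statement to prove is Lemma~\ref{lemdifij}: for the guides $i=i_{n+1}$, $j=j_{n+1}$, idle string $k=k_{n+1}$, and $a=a_{n+1}$, the difference of the $i$-th and $j$-th rows of the data matrix satisfies
\[
d_{it}^{\prime}-d_{jt}^{\prime}=\left(-1\right)^{\mu_{a}}\left(d_{it}+d_{jt}+\mu_{a}\delta_{it}+\left(1-\mu_{a}\right)\delta_{jt}\right),\quad 1\leq t\leq 3.
\]

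\textbf{Approach.} The plan is to compute $d_{it}^{\prime}-d_{jt}^{\prime}$ directly from the explicit formulas in (\ref{reldprima}), separating the three cases $t=k$, $t=i$, $t=j$ (which is exactly the structure of the right-hand side via the Kronecker deltas), and then collapsing the floor/ceiling expressions using the parity of $a$ encoded by $\mu_{a}$. The key elementary identities I would invoke are $\left\lfloor a/2\right\rfloor=(a-\mu_a)/2$, $\left\lceil a/2\right\rceil=(a+\mu_a)/2$, $\left\lceil (a-1)/2\right\rceil=(a-1+\mu_a)/2$, $\left\lfloor (a-1)/2\right\rfloor=(a-1-\mu_a)/2$ (since $a$ and $a-1$ have opposite parity, $\mu_{a-1}=1-\mu_a$), and hence $1+2\left\lfloor a/2\right\rfloor = 1+a-\mu_a$, $2\left\lceil (a-1)/2\right\rceil = a-1+\mu_a$, $1+2\left\lfloor (a-1)/2\right\rfloor = a-\mu_a$, $2\left\lceil a/2\right\rceil = a+\mu_a$.

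\textbf{Key steps.} First, for $t=k$: from (\ref{reldprima}), $d_{ik}^{\prime}-d_{jk}^{\prime} = \bigl(1+2\left\lfloor a/2\right\rfloor - 2\left\lceil a/2\right\rceil\bigr)d_{ik} + \bigl(2\left\lceil (a-1)/2\right\rceil - 1 - 2\left\lfloor (a-1)/2\right\rfloor\bigr)d_{jk}$. Using the identities above, the coefficient of $d_{ik}$ becomes $1 - 2\mu_a = (-1)^{\mu_a}$ (check: $\mu_a=0$ gives $1$, $\mu_a=1$ gives $-1$), and the coefficient of $d_{jk}$ becomes $(a-1+\mu_a) - (a-\mu_a) = 2\mu_a - 1 = -(-1)^{\mu_a}$; wait, that would give $(-1)^{\mu_a}(d_{ik}-d_{jk})$, which mismatches the claimed $(-1)^{\mu_a}(d_{ik}+d_{jk})$. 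So I must recheck: the coefficient of $d_{jk}$ in $d_{jk}^{\prime}$ is $1+2\left\lfloor (a-1)/2\right\rfloor = a-\mu_a$ and in $d_{ik}^{\prime}$ there is no $d_{jk}$ term — rather $d_{ik}^{\prime}$ has a $2\left\lceil (a-1)/2\right\rceil d_{jk}$ term. So $d_{ik}^{\prime}-d_{jk}^{\prime}$ has $d_{jk}$-coefficient $2\left\lceil (a-1)/2\right\rceil - (1+2\left\lfloor (a-1)/2\right\rfloor) = (a-1+\mu_a)-(a-\mu_a) = 2\mu_a-1 = -(1-2\mu_a) = -(-1)^{\mu_a}$. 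Hmm. Let me recompute with $a$ even ($\mu_a=0$): coefficients are $1-0=1$ for $d_{ik}$ and $(a-1)-(a) = -1$ for $d_{jk}$, giving $d_{ik}-d_{jk}$; but the lemma wants $+(d_{ik}+d_{jk})$. This means I have a sign convention issue in reading (\ref{reldprima}) — so the real first step is to carefully re-derive $d_{jk}^{\prime}$: actually $d_{jk}^{\prime}=(1+2\left\lfloor (a-1)/2\right\rfloor)d_{jk}+2\left\lceil a/2\right\rceil d_{ik}$, so $d_{ik}^{\prime}-d_{jk}^{\prime} = [(1+2\lfloor a/2\rfloor) - 2\lceil a/2\rceil]d_{ik} + [2\lceil(a-1)/2\rceil - (1+2\lfloor(a-1)/2\rfloor)]d_{jk}$; with $a$ even: $[1-a]d_{ik}+[(a-1)-a]d_{jk} = (1-a)d_{ik} - d_{jk}$ — that has an extra $a$ floating around, so I've misread which floors/ceilings multiply which entry. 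The careful first step, then, is to transcribe (\ref{reldprima}) exactly, with no sign or index errors, and only then simplify.

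\textbf{Main obstacle and remedy.} The genuine difficulty is purely bookkeeping: (\ref{reldprima}) is a six-line system with floors, ceilings, and the coefficient $1+2\left\lfloor a/2\right\rfloor$ that is \emph{not} symmetric between the $i$ and $j$ rows, so a naive subtraction does not obviously telescope to $(-1)^{\mu_a}(d_{it}+d_{jt}+\cdots)$. I expect the resolution is that after substituting the parity identities, the odd case ($\mu_a=1$) swaps the roles of the floor and ceiling terms, producing the sign $(-1)^{\mu_a}$ and converting a difference $d_{it}-d_{jt}$ into a sum $d_{it}+d_{jt}$ precisely because of the $+1$ in $1+2\lfloor a/2\rfloor$ versus the bare $2\lceil a/2\rceil$. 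The constant terms $\left\lfloor a/2\right\rfloor$, $\left\lceil a/2\right\rceil$, $\left\lceil (a-1)/2\right\rceil$, $\left\lfloor (a-1)/2\right\rfloor$ appearing in $d_{ii}^{\prime}, d_{ij}^{\prime}, d_{ji}^{\prime}, d_{jj}^{\prime}$ are what generate the $\mu_a\delta_{it}+(1-\mu_a)\delta_{jt}$ correction: for $t=i$ the surviving constant is $\left\lfloor a/2\right\rfloor-\left\lceil a/2\right\rceil=-\mu_a$ times $(-1)^{\mu_a}=\mu_a$ (since $\mu_a\in\{0,1\}$, $-\mu_a(-1)^{\mu_a}=\mu_a$), and for $t=j$ it is $\left\lceil(a-1)/2\right\rceil-\left\lfloor(a-1)/2\right\rfloor=\mu_{a-1}=1-\mu_a$ times $(-1)^{\mu_a}$... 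I would work out these constant-term checks explicitly as a small case analysis on $\mu_a\in\{0,1\}$, which is the cleanest way to pin down every sign. Once the coefficient-of-$d_{it}$, coefficient-of-$d_{jt}$, and constant are each verified in both parities, the lemma follows by collecting terms; no deeper idea is needed.
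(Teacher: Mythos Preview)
Your approach is exactly the paper's: subtract the $i$- and $j$-rows of (\ref{reldprima}) for each $t\in\{k,i,j\}$, reduce the floor/ceiling combinations via parity, and read off the Kronecker-delta corrections from the constant terms. The confusion in your exploratory computation comes from a single slip in the parity identities --- since $\mu_{a-1}=1-\mu_a$, one has $\lceil(a-1)/2\rceil=(a-\mu_a)/2$ and $\lfloor(a-1)/2\rfloor=(a-2+\mu_a)/2$, not the versions with $\mu_a$ that you wrote; with this correction the coefficient of $d_{jt}$ in $d'_{it}-d'_{jt}$ becomes $2\lceil(a-1)/2\rceil-(1+2\lfloor(a-1)/2\rfloor)=(a-\mu_a)-(a-1+\mu_a)=1-2\mu_a=(-1)^{\mu_a}$, matching the coefficient of $d_{it}$ and giving the claimed sum rather than a difference. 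Once that is fixed, your outline (including the constant-term check $\lfloor a/2\rfloor-\lceil a/2\rceil=-\mu_a$ and $\lceil(a-1)/2\rceil-\lfloor(a-1)/2\rfloor=1-\mu_a$) is precisely the paper's proof.
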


\begin{proof}
Suppose $i=i_{n+1},j=j_{n+1},$ $k=k_{n+1}$ and $a=a_{n+1}$. Using the
relations given in (\ref{reldprima}) we get%
\begin{equation*}
d_{ik}^{\prime}-d_{jk}^{\prime}=\left(  1+2\left\lfloor \frac{a}%
{2}\right\rfloor -2\left\lceil \frac{a}{2}\right\rceil \right)  d_{ik}+\left(
-1+2\left\lceil \frac{a-1}{2}\right\rceil -2\left\lfloor \frac{a-1}%
{2}\right\rfloor \right)  d_{jk},%
\end{equation*}%
\begin{align*}
d_{ii}^{\prime}-d_{ji}^{\prime}  & =\left(  1+2\left\lfloor \frac{a}%
{2}\right\rfloor -2\left\lceil \frac{a}{2}\right\rceil \right)  d_{ii}+\left(
-1+2\left\lceil \frac{a-1}{2}\right\rceil -2\left\lfloor \frac{a-1}%
{2}\right\rfloor \right)  d_{ji}+\\
& \left(\left\lfloor \frac{a}{2}\right\rfloor -\left\lceil \frac{a}{2}\right\rceil
\right),\nonumber
\end{align*}
\begin{align*}
d_{ij}^{\prime}-d_{jj}^{\prime} &  =\left(  1+2\left\lfloor \frac{a}%
{2}\right\rfloor -2\left\lceil \frac{a}{2}\right\rceil \right)  d_{ij}+\left(
-1+2\left\lceil \frac{a-1}{2}\right\rceil -2\left\lfloor \frac{a-1}%
{2}\right\rfloor \right)  d_{jj}+\\
&  \left(\left\lceil \frac{a-1}{2}\right\rceil -\left\lfloor \frac{a-1}%
{2}\right\rfloor \right),\nonumber
\end{align*}
but$\ \left\lfloor \frac{a}{2}\right\rfloor -\left\lceil \frac{a}%
{2}\right\rceil =-\mu_{a}=\left(  -1\right)  ^{\mu_{a}}\mu_{a}%
,\ \ \ \left\lceil \frac{a-1}{2}\right\rceil -\left\lfloor \frac{a-1}%
{2}\right\rfloor =1-\mu_{a}=\left(  -1\right)  ^{\mu_{a}}\left(  1-\mu
_{a}\right)  $ and $1+2\left\lfloor a/2\right\rfloor -2\left\lceil
a/2\right\rceil =-1+2\left\lceil \frac{a-1}{2}\right\rceil -2\left\lfloor
\frac{a-1}{2}\right\rfloor =\left(  -1\right)  ^{\mu_{a}},$ therefore
\begin{align}
&  d_{ik}^{\prime}-d_{jk}^{\prime}=\left(  -1\right)  ^{\mu_{a}}\left(
d_{ik}+d_{jk}\right)  \\
&  d_{ii}^{\prime}-d_{ji}^{\prime}=\left(  -1\right)  ^{\mu_{a}}\left(
d_{ii}+d_{ji}\right)  -\mu_{a}\nonumber\\
&  d_{ij}^{\prime}-d_{jj}^{\prime}=\left(  -1\right)  ^{\mu_{a}}\left(
d_{ij}+d_{jj}\right)  +1-\mu_{a}.\nonumber
\end{align}
Also, as%
\[
\left(  -1\right)  ^{\mu_{a}}\left(  \mu_{a}\delta_{it}+\left(  1-\mu
_{a}\right)  \delta_{jt}\right)  =\left\{
\begin{array}
[c]{cc}%
0, & \text{if }t=k\\
-\mu_{a}, & \text{if }t=i\\
1-\mu_{a}, & \text{if }t=j
\end{array}
\right.
\]
we get (\ref{ecdifij}).
\end{proof}

The following corollary will be used in the proof of Theorem \ref{theomain}.
Later, we will consider only the values of the sum of only two of the columns
of $D_{n}$, because the other one will give redundant information.

\begin{corollary}
\label{corsn}If $i=i_{n+1},j=j_{n+1},$ $k=k_{n+1}$and $a=a_{n+1}\ $then%
\begin{equation}
s_{n,t}=\left(  -1\right)  ^{\mu_{a}}(d_{it}^{\prime}-d_{jt}^{\prime}%
)+d_{kt}^{\prime}-\mu_{a}\delta_{it}+\left(  \mu_{a}-1\right)  \delta
_{jt},\ \ \ \text{for\ \ }1\leq t\leq3, \label{eccorsn}%
\end{equation}
where $s_{n,t}=d_{1t}+d_{2t}+d_{3t}.$
\end{corollary}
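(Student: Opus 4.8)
The plan is to deduce Corollary \ref{corsn} directly from Lemma \ref{lemdifij} by a purely algebraic manipulation, treating $(\ref{ecdifij})$ as an identity to be solved for $s_{n,t}$. The starting point is the observation recorded in $(\ref{reldprima})$ that the idle string is unchanged, so $d_{kt}^{\prime}=d_{kt}$ for $1\le t\le 3$; hence $d_{kt}$ may be replaced by $d_{kt}^{\prime}$ freely. First I would write $s_{n,t}=d_{1t}+d_{2t}+d_{3t}=d_{it}+d_{jt}+d_{kt}$, since $\{i,j,k\}=\{1,2,3\}$, so that the column sum is the sum of the two guide entries plus the idle entry.

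Next I would solve Lemma \ref{lemdifij} for the quantity $d_{it}+d_{jt}$. Equation $(\ref{ecdifij})$ states
\[
d_{it}^{\prime}-d_{jt}^{\prime}=(-1)^{\mu_a}\bigl(d_{it}+d_{jt}+\mu_a\delta_{it}+(1-\mu_a)\delta_{jt}\bigr),
\]
and since $(-1)^{\mu_a}=\pm1$ we may multiply both sides by $(-1)^{\mu_a}$ to get
\[
d_{it}+d_{jt}=(-1)^{\mu_a}\bigl(d_{it}^{\prime}-d_{jt}^{\prime}\bigr)-\mu_a\delta_{it}-(1-\mu_a)\delta_{jt}.
\]
Adding $d_{kt}=d_{kt}^{\prime}$ to both sides yields
\[
s_{n,t}=(-1)^{\mu_a}\bigl(d_{it}^{\prime}-d_{jt}^{\prime}\bigr)+d_{kt}^{\prime}-\mu_a\delta_{it}-(1-\mu_a)\delta_{jt},
\]
which is exactly $(\ref{eccorsn})$ once one notes $-(1-\mu_a)=\mu_a-1$. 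So the proof is a two-line consequence once the bookkeeping of which index is which is in place.

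The only point that requires any care — and hence the ``main obstacle,'' though it is a mild one — is making sure the index conventions are consistent: Lemma \ref{lemdifij} is stated with guides and idle string for step $n+1$, namely $i=i_{n+1}$, $j=j_{n+1}$, $k=k_{n+1}$, and $a=a_{n+1}$, and these are precisely the hypotheses of the corollary, so no translation is needed. One should also double-check that $\mu_a=\mu_{a_{n+1}}$ matches the parity bookkeeping used elsewhere, but that is immediate from the definition $(\ref{defmu})$. I would therefore present the proof as: invoke $d_{kt}^{\prime}=d_{kt}$ from $(\ref{reldprima})$, rewrite $s_{n,t}$ as $(d_{it}+d_{jt})+d_{kt}$, solve $(\ref{ecdifij})$ for $d_{it}+d_{jt}$ using $(-1)^{2\mu_a}=1$, and substitute — no estimates, no case analysis beyond what $(\ref{ecdifij})$ already packages via the Kronecker deltas.

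\begin{proof}
By the first relation in $(\ref{reldprima})$, the idle string does not change under the transformation, so $d_{kt}^{\prime}=d_{kt}$ for $1\le t\le 3$. Since $\{i,j,k\}=\{1,2,3\}$ we may write
\[
s_{n,t}=d_{1t}+d_{2t}+d_{3t}=\bigl(d_{it}+d_{jt}\bigr)+d_{kt}.
\]
By Lemma \ref{lemdifij},
\[
d_{it}^{\prime}-d_{jt}^{\prime}=\left(-1\right)^{\mu_a}\Bigl(d_{it}+d_{jt}+\mu_a\delta_{it}+\left(1-\mu_a\right)\delta_{jt}\Bigr),
\]
and multiplying both sides by $\left(-1\right)^{\mu_a}$, using $\left(-1\right)^{2\mu_a}=1$, gives
\[
d_{it}+d_{jt}=\left(-1\right)^{\mu_a}\bigl(d_{it}^{\prime}-d_{jt}^{\prime}\bigr)-\mu_a\delta_{it}-\left(1-\mu_a\right)\delta_{jt}.
\]
Substituting this and $d_{kt}=d_{kt}^{\prime}$ into the expression for $s_{n,t}$ yields
\[
s_{n,t}=\left(-1\right)^{\mu_a}\bigl(d_{it}^{\prime}-d_{jt}^{\prime}\bigr)+d_{kt}^{\prime}-\mu_a\delta_{it}+\left(\mu_a-1\right)\delta_{jt},
\]
which is $(\ref{eccorsn})$.
\end{proof}
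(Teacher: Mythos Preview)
Your proof is correct and follows essentially the same route as the paper: invoke $d_{kt}^{\prime}=d_{kt}$ for the idle string, multiply equation $(\ref{ecdifij})$ by $(-1)^{\mu_a}$ to isolate $d_{it}+d_{jt}$, and add $d_{kt}$ to recover $s_{n,t}$. The paper's own proof is the same two-line manipulation, just written with the sides in a slightly different order.
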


\begin{proof}
It is enough to recall that $k=k_{n+1}$ is the idle {\normalsize string}, so
$d_{kt}^{\prime}=d_{kt}$, therefore from Lemma \ref{lemdifij} we get,%
\[
\left(  -1\right)  ^{\mu_{a}}\left(  d_{it}^{\prime}-d_{jt}^{\prime}\right)
+d_{kt}^{\prime}=d_{it}+d_{jt}+d_{kt}+\mu_{a}\delta_{it}+\left(  1-\mu
_{a}\right)  \delta_{jt},\ \ \text{for\ }1\leq t\leq3.
\]

\end{proof}

\begin{lemma}
\label{lemsnmenos1}If $i=i_{n+1},j=j_{n+1}$ and $k=k_{n+1}$ then
\begin{equation}
s_{n-1,t}=d_{it}-d_{jt}+d_{kt}-\delta_{jt} \label{relsmen}%
\end{equation}

\end{lemma}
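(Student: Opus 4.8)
The plan is to obtain this identity as a direct consequence of Corollary \ref{corsn}, read one step earlier, combined with the guide recursion of Corollary \ref{reglaguias}. Concretely, replace $n$ by $n-1$ in Corollary \ref{corsn}: taking $a=a_n$, letting $i_n,j_n,k_n$ play the role of $i_{n+1},j_{n+1},k_{n+1}$ there, and writing $d_{rt}$ for the entries of $D_n$, that corollary gives
\[
s_{n-1,t}=(-1)^{\mu_{a_n}}\bigl(d_{i_n t}-d_{j_n t}\bigr)+d_{k_n t}-\mu_{a_n}\delta_{i_n t}+(\mu_{a_n}-1)\delta_{j_n t},\qquad 1\le t\le 3 .
\]
Hence it suffices to check that the right-hand side coincides with $d_{i_{n+1}t}-d_{j_{n+1}t}+d_{k_{n+1}t}-\delta_{j_{n+1}t}$, and for this I would split according to the parity of $a_n$, which is exactly what $\mu_{a_n}$ records.

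If $a_n$ is even, then $\mu_{a_n}=0$ and the displayed formula reads $s_{n-1,t}=d_{i_n t}-d_{j_n t}+d_{k_n t}-\delta_{j_n t}$; on the other hand Corollary \ref{reglaguias} gives $i_{n+1}=k_n$, $j_{n+1}=j_n$, $k_{n+1}=i_n$, so $d_{i_{n+1}t}-d_{j_{n+1}t}+d_{k_{n+1}t}-\delta_{j_{n+1}t}=d_{k_n t}-d_{j_n t}+d_{i_n t}-\delta_{j_n t}$, the same expression. If $a_n$ is odd, then $\mu_{a_n}=1$ and the displayed formula reads $s_{n-1,t}=-d_{i_n t}+d_{j_n t}+d_{k_n t}-\delta_{i_n t}$; now Corollary \ref{reglaguias} gives $i_{n+1}=k_n$, $j_{n+1}=i_n$, $k_{n+1}=j_n$, so $d_{i_{n+1}t}-d_{j_{n+1}t}+d_{k_{n+1}t}-\delta_{j_{n+1}t}=d_{k_n t}-d_{i_n t}+d_{j_n t}-\delta_{i_n t}$, again the same. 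This settles the identity for $n\ge 2$.

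For the base case $n=1$ one notes that $s_{0,t}=0$ (recall $D_0=[0]$), and then verifies $d_{i_2 t}-d_{j_2 t}+d_{k_2 t}-\delta_{j_2 t}=0$ directly from the explicit matrix $D_1$ of Step One and from $i_2=k_1=3$ together with the parity of $a_1$, which runs exactly as the even/odd computation above. I do not expect a genuine obstacle here; the only delicate point is the bookkeeping of the index shift in Corollary \ref{corsn} and of the relabelling $\{i_n,j_n,k_n\}\to\{i_{n+1},j_{n+1},k_{n+1}\}$, which permutes the three positions of the Kronecker deltas differently in the even and odd cases.
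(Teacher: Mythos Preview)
Your proof is correct and follows essentially the same route as the paper's: shift Corollary~\ref{corsn} down by one step to express $s_{n-1,t}$ in terms of the entries of $D_n$ and the step-$n$ guides $i_n,j_n,k_n$, then use the guide recursion of Corollary~\ref{reglaguias} (case-splitting on the parity of $a_n$) to rewrite everything in terms of $i_{n+1},j_{n+1},k_{n+1}$. The only difference is that you include an explicit base-case check at $n=1$, which the paper omits.
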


\begin{proof}
We apply Corollary \ref{corsn} to the step $n$.

For $a_{n}$ even, $\mu_{a_{n}}=0$ and by Corollary \ref{reglaguias},
$i_{n}=k_{n+1}=k,$ $j_{n}=j_{n+1}=j$ and $k_{n}=i_{n+1}=i,\ $so we get by
(\ref{eccorsn})
\[
s_{n-1,t}=\left(  -1\right)  ^{\mu_{a_{n}}}(d_{kt}-d_{jt})+d_{it}-\mu_{a_{n}%
}\delta_{kt}+\left(  \mu_{a_{n}}-1\right)  \delta_{jt}=d_{kt}-d_{jt}%
+d_{it}-\delta_{jt}.
\]
For $a_{n}$ odd, $\mu_{a_{n}}=1$ and by Corollary \ref{reglaguias}
$i_{n}=j_{n+1}=j,$ $j_{n}=k_{n+1}=k$ and $k_{n}=i_{n+1}=i$,\ so by
(\ref{eccorsn})%
\[
s_{n-1,t}=\left(  -1\right)  ^{\mu_{a_{n}}}(d_{jt}-d_{kt})+d_{it}-\mu_{a_{n}%
}\delta_{jt}+\left(  \mu_{a_{n}}-1\right)  \delta_{kt}=-d_{jt}+d_{kt}%
+d_{it}-\delta_{jt},
\]
therefore, in both cases we get (\ref{relsmen}).
\end{proof}

This lemma tell us that the values of $s_{n-1,t}$ can be computed using the
matrix $D_{n}$.

\begin{lemma}
\label{lemdk}If $i=i_{n+1},j=j_{n+1},k=k_{n+1}$ then
\begin{equation}
d_{kt}=d_{it}+d_{jt}-\delta_{kt}+\delta_{3t}+\left(  -1\right)  ^{\delta_{3t}%
}\mu_{n},\ \ \ \ 1\leq t\leq3, \label{conlemdk}%
\end{equation}
where$\ \mu_{n}=0$ if $n$ is even and$\ \mu_{n}=1$ if $n$ is odd.
\end{lemma}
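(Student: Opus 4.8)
The statement to prove is Lemma~\ref{lemdk}:
\[
d_{kt}=d_{it}+d_{jt}-\delta_{kt}+\delta_{3t}+\left(-1\right)^{\delta_{3t}}\mu_{n},\quad 1\leq t\leq 3,
\]
with $i=i_{n+1}$, $j=j_{n+1}$, $k=k_{n+1}$. My plan is a straightforward induction on $n$. The base case is $n=1$ (so we are describing the idle string in step $2$): here $D_{1}$ is the explicit matrix of Step One, the guides in step $2$ are $i_2=k_1=3$, and (by Corollary~\ref{reglaguias} applied according to the parity of $a_1$) $j_2,k_2\in\{1,2\}$; one checks the three columns $t=1,2,3$ directly using $d_{3t}(1)=0$ for all $t$ and the known values $d_{11},d_{12},d_{21},d_{22}$. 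Since $n=1$ is odd, $\mu_1=1$, and the identity reduces to checking that $d_{kt}=d_{it}+d_{jt}-\delta_{kt}+\delta_{3t}-1+2\delta_{3t}\cdot(\text{something})$—concretely, for $t=3$ all entries vanish and $\delta_{3t}+(-1)^{\delta_{3t}}\mu_n = 1-1=0$, for $t\in\{1,2\}$ we need $d_{k t}=d_{it}+d_{jt}-\delta_{kt}-\mu_1$, which is a finite check against the Step One matrix.

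For the inductive step, assume \eqref{conlemdk} holds at stage $n$ (i.e.\ describing $k_{n+1}$ in terms of $i_{n+1},j_{n+1}$ via the matrix $D_n$), and derive it at stage $n+1$ (describing $k_{n+2}$ in terms of $i_{n+2},j_{n+2}$ via $D_{n+1}$). The machinery I would use is exactly the relabelling dictionary of Corollary~\ref{reglaguias}: $i_{n+2}=k_{n+1}=k$, and depending on the parity of $a_{n+1}$ either $(j_{n+2},k_{n+2})=(j_{n+1},i_{n+1})=(j,i)$ (if $a_{n+1}$ even) or $(j_{n+2},k_{n+2})=(i_{n+1},j_{n+1})=(i,j)$ (if $a_{n+1}$ odd). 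So the target identity at stage $n+1$ becomes a statement about $d'_{it},d'_{jt},d'_{kt}$ (the entries of $D_{n+1}$), which I expand using the explicit formulas \eqref{reldprima}. The plan is: substitute \eqref{reldprima}, then eliminate $d_{kt}$ everywhere using the inductive hypothesis \eqref{conlemdk}, collapse the floor/ceiling expressions using the identities already recorded in the proof of Lemma~\ref{lemdifij} (namely $\lfloor a/2\rfloor+\lceil a/2\rceil=a$, $\lceil(a-1)/2\rceil-\lfloor(a-1)/2\rfloor=1-\mu_a$, $\lfloor a/2\rfloor-\lceil a/2\rceil=-\mu_a$, and $\mu_{n+1}=\mu_n+\mu_{a_{n+1}}-2\mu_n\mu_{a_{n+1}}$, i.e.\ $\mu_{n+1}\equiv n+1\pmod 2$), and check the two parity cases and the three values of $t$ separately.

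**Main obstacle.** The arithmetic is routine but bookkeeping-heavy: the two parity branches of $a_{n+1}$ interact with the Kronecker-delta terms $\delta_{kt},\delta_{3t}$ and with the sign $(-1)^{\delta_{3t}}$, and the idle string $k_{n+1}$ may or may not equal string $3$, so the $\delta_{3t}$ term has to be tracked carefully through the relabelling. The cleanest route is probably not to induct through \eqref{reldprima} at all, but to combine the \emph{already-proved} auxiliary lemmas: Lemma~\ref{lemsnmenos1} expresses $s_{n-1,t}=d_{it}-d_{jt}+d_{kt}-\delta_{jt}$, and a companion relation for $s_{n,t}$ (or Corollary~\ref{corsn}) expresses $s_{n,t}$ through $D_{n+1}$-type data; subtracting/combining these, together with the observation that $s_{n,t}-s_{n-1,t}$ and $d_{kt}$ are linked through how the final diagram is closed (the $\delta_{3t}$ and $\mu_n$ terms encode exactly the asymmetry between string $3$ and the bridge-$\alpha$ string recorded in the ``final step'' discussion), should give \eqref{conlemdk} with much less computation. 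I would first attempt this combinatorial shortcut; if the exact matching of the $\mu_n$ and $\delta_{3t}$ terms proves fiddly, I would fall back on the brute-force induction via \eqref{reldprima}, which is guaranteed to work and only requires patience.
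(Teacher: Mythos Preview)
Your primary plan---induction on $n$, relabelling the guides via Corollary~\ref{reglaguias}, splitting on the parity of $a_{n+1}$, and pushing the update formulas for $D_{n+1}$ through the inductive hypothesis---is exactly the paper's proof. Two small remarks. First, instead of expanding the raw relations~\eqref{reldprima} and then re-collapsing the floor/ceiling identities, the paper simply cites Lemma~\ref{lemdifij} to obtain $d'_{it}-d'_{jt}$ in one line, combines this with $d'_{kt}=d_{kt}$ (idle string) and the inductive hypothesis rewritten as $d_{it}+d_{jt}=d_{kt}+\delta_{kt}-\delta_{3t}-(-1)^{\delta_{3t}}\mu_n$, and finishes each parity case in two or three lines; you should do the same rather than re-deriving those identities. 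Second, your parenthetical formula $\mu_{n+1}=\mu_n+\mu_{a_{n+1}}-2\mu_n\mu_{a_{n+1}}$ is a slip: here $\mu_n$ is the parity of the index $n$ itself, so $\mu_{n+1}=1-\mu_n$ regardless of $a_{n+1}$ (your ``i.e.'' clause is right, and this is what the paper uses via the identity $-\delta_{3t}-(-1)^{\delta_{3t}}\mu_n=\delta_{3t}-1+(-1)^{\delta_{3t}}\mu_{n+1}$). The paper also records the trivial base case $n=0$ alongside $n=1$, and it does not take your alternative shortcut through Lemma~\ref{lemsnmenos1} and Corollary~\ref{corsn}.
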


\begin{proof}
The proof is by induction on $n$. Notice that condition (\ref{conlemdk})
corresponds to the following three conditions:%
\begin{align*}
d_{k1}  &  =d_{i1}+d_{j1}-\delta_{k1}+\mu_{n},\\
d_{k2}  &  =d_{i2}+d_{j2}-\delta_{k2}+\mu_{n}\\
d_{k3}  &  =d_{i3}+d_{j3}-\delta_{k3}+1-\mu_{n}.
\end{align*}

For $n=0,$ $D_{0}=\left[  0\right]  ,$ and the guides for step one are
$i=i_{1}=1,j=j_{1}=2$ and the idle string is $k=k_{1}=3$. Then%
\begin{align*}
d_{31}  &  =0=d_{11}+d_{21}-\delta_{31}+\mu_{0},\\
d_{32}  &  =0=d_{12}+d_{22}-\delta_{32}+\mu_{0},\\
d_{33}  &  =0=d_{13}+d_{23}-\delta_{33}+1-\mu_{0}.
\end{align*}

For $n=1,$
\begin{equation}
D_{1}=%
\begin{bmatrix}
\left\lfloor \dfrac{a_{1}}{2}\right\rfloor _{{}} & \left\lceil \dfrac{a_{1}%
-1}{2}\right\rceil  & 0\\
\left\lceil \dfrac{a_{1}}{2}\right\rceil ^{{}} & \left\lfloor \dfrac{a_{1}%
-1}{2}\right\rfloor  & 0\\
0 & 0 & 0
\end{bmatrix}
. \label{matd1}%
\end{equation}
If $a_{1}$ is even, by Corollary \ref{reglaguias}, $i=i_{2}=3,j=j_{2}=2$ and
$k=k_{2}=1,$ then
\begin{align*}
d_{11}  &  =\left\lfloor \frac{a_{1}}{2}\right\rfloor =\left\lceil \frac
{a_{1}}{2}\right\rceil =d_{31}+d_{21}-\delta_{11}+\mu_{1},\\
d_{12}  &  =\left\lceil \frac{a_{1}-1}{2}\right\rceil =\left\lfloor
\frac{a_{1}-1}{2}\right\rfloor +1=d_{32}+d_{22}-\delta_{12}+\mu_{1},\\
d_{13}  &  =0=d_{33}+d_{23}-\delta_{13}+1-\mu_{1}.
\end{align*}
If $a_{1}$ is odd, by Corollary \ref{reglaguias}, $i=i_{2}=3,j=j_{2}=1$ and
$k=k_{2}=2$ therefore%
\begin{align*}
d_{21}  &  =\left\lceil \frac{a_{1}}{2}\right\rceil =\left\lfloor \frac{a_{1}%
}{2}\right\rfloor +1=d_{31}+d_{11}+1=d_{31}+d_{11}-\delta_{21}+\mu_{1},\\
d_{22}  &  =\left\lfloor \frac{a_{1}-1}{2}\right\rfloor =\left\lceil
\frac{a_{1}-1}{2}\right\rceil =d_{32}+d_{12}-\delta_{22}+\mu_{1},\\
d_{13}  &  =0=d_{33}+d_{23}-\delta_{23}+1-\mu_{1}.
\end{align*}
Suppose that $i=i_{n+1},\ j=j_{n+1},k=k_{n+1}$ and that relations
(\ref{conlemdk}) hold for step $n$. To prove that the results are valid for
step $n+1$ we need to find the $n+2$ step guides, that depend on wether
$a_{n+1}$ is even or odd.

If $a_{n+1}$ is even by Corollary \ref{reglaguias}, $i_{n+2}=k_{n+1}%
=k,j_{n+2}=j_{n+1}=j$ and $k_{n+2}=i_{n+1}=i$ and we need to prove that
\[
d_{it}^{\prime}=d_{jt}^{\prime}+d_{kt}^{\prime}-\delta_{i1}+\delta
_{3t}+\left(  -1\right)  ^{\delta_{3t}}\mu_{n+1},\text{ for}~1\leq t\leq3.
\]
By Lemma \ref{lemdifij} we have
\[
d_{it}^{\prime}-d_{jt}^{\prime}=d_{it}+d_{jt}+\delta_{jt}%
\]
As $k$ is the idle {\normalsize string} in the $n+1$ step, we have that
$d_{kt}^{\prime}=d_{kt}$ and for (\ref{conlemdk}) we get%
\begin{equation}
d_{it}+d_{jt}=d_{kt}^{\prime}+\delta_{kt}-\delta_{3t}-\left(  -1\right)
^{\delta_{3t}}\mu_{n}, \label{ecdidj}%
\end{equation}
so%
\[
d_{it}^{\prime}=d_{jt}^{\prime}+\left(  d_{it}+d_{jt}\right)  +\delta
_{jt}=d_{jt}^{\prime}+\left(  d_{kt}^{\prime}+\delta_{kt}-\delta_{3t}-\left(
-1\right)  ^{\delta_{3t}}\mu_{n}\right)  +\delta_{jt},
\]
but $\delta_{kt}+\delta_{jt}=1-\delta_{it}$ and
\begin{equation}
-\delta_{3t}-\left(  -1\right)  ^{\delta_{3t}}\mu_{n}=-\delta_{3t}-\left(
-1\right)  ^{\delta_{3t}}\left(  1-\mu_{n+1}\right)  =\delta_{3t}-1+\left(
-1\right)  ^{\delta_{3t}}\mu_{n+1}, \label{mnmas1}%
\end{equation}
so%
\[
d_{it}^{\prime}=d_{jt}^{\prime}+d_{kt}^{\prime}+1-\delta_{it}-\delta
_{3t}-\left(  -1\right)  ^{\delta_{3t}}\mu_{n}=d_{jt}^{\prime}+d_{kt}^{\prime
}-\delta_{i1}+\delta_{3t}+\left(  -1\right)  ^{\delta_{3t}}\mu_{n+1}.
\]
If $a=a_{n+1}$ is odd, by Corollary \ref{reglaguias} $i_{n+2}=k_{n+1}%
=k,j_{n+2}=i_{n+1}=i$ and $k_{n+2}=j_{n+1}=j.$

By Lemma \ref{lemdifij} we have
\[
d_{it}^{\prime}-d_{jt}^{\prime}=-d_{it}-d_{jt}-\delta_{it}%
\]
thus, by (\ref{ecdidj}) and (\ref{mnmas1}) we have
\begin{align*}
d_{jt}^{\prime}  &  =d_{it}^{\prime}+(d_{it}+d_{jt})+\delta_{it}%
=d_{it}^{\prime}+(d_{kt}^{\prime}+\delta_{kt}-\delta_{3t}-\left(  -1\right)
^{\delta_{3t}}\mu_{n})+\delta_{it}\\
&  =d_{it}^{\prime}+d_{kt}^{\prime}+\delta_{kt}+\delta_{it}-1+\delta
_{3t}+\left(  -1\right)  ^{\delta_{3t}}\mu_{n+1}\\
&  =d_{it}^{\prime}+d_{kt}^{\prime}+\delta_{jt}+\delta_{3t}+\left(  -1\right)
^{\delta_{3t}}\mu_{n+1}.
\end{align*}

\end{proof}

\begin{corollary}
\label{cor2dk}If $k=k_{n+1}$ then
\[
s_{n,t}=2d_{kt}+\delta_{kt}-\delta_{3t}-\left(  -1\right)  ^{\delta_{3t}}%
\mu_{n},\ \ \ \ 1\leq t\leq3.
\]

\end{corollary}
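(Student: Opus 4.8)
The plan is to read this off directly from Lemma~\ref{lemdk}. The only extra observation needed is that, since $i=i_{n+1}$, $j=j_{n+1}$, $k=k_{n+1}$ is a permutation of $\{1,2,3\}$ (it is the permutation $\sigma_{n+1}$), the $t$-th column sum of $D_{n}$ can be reorganized as
\[
s_{n,t}=d_{1t}+d_{2t}+d_{3t}=d_{it}+d_{jt}+d_{kt},\qquad 1\leq t\leq 3 .
\]
So the strategy is: express $d_{it}+d_{jt}$ using the conclusion of Lemma~\ref{lemdk}, then add $d_{kt}$ to both sides.

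Concretely, I would first rewrite the identity (\ref{conlemdk}) of Lemma~\ref{lemdk}, namely
\[
d_{kt}=d_{it}+d_{jt}-\delta_{kt}+\delta_{3t}+\left(-1\right)^{\delta_{3t}}\mu_{n},
\]
by solving for the two-guide sum:
\[
d_{it}+d_{jt}=d_{kt}+\delta_{kt}-\delta_{3t}-\left(-1\right)^{\delta_{3t}}\mu_{n}.
\]
Substituting this into $s_{n,t}=(d_{it}+d_{jt})+d_{kt}$ gives at once
\[
s_{n,t}=\bigl(d_{kt}+\delta_{kt}-\delta_{3t}-\left(-1\right)^{\delta_{3t}}\mu_{n}\bigr)+d_{kt}=2d_{kt}+\delta_{kt}-\delta_{3t}-\left(-1\right)^{\delta_{3t}}\mu_{n},
\]
which is exactly the asserted formula, valid for each $t\in\{1,2,3\}$.

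There is no real obstacle here: the corollary is an immediate algebraic consequence of Lemma~\ref{lemdk}, and all the genuine work — in particular the induction on $n$ and the even/odd case split governed by Corollary~\ref{reglaguias} — has already been carried out in the proof of that lemma. The hypothesis "$k=k_{n+1}$" of the corollary is precisely the hypothesis under which Lemma~\ref{lemdk} is stated, so no further checking of cases is required; one only has to make sure the indices $i,j,k$ in the two statements refer to the same step $n+1$, which they do by construction.
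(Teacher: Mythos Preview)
Your proof is correct and is exactly the intended derivation: the paper states this result as an immediate corollary of Lemma~\ref{lemdk} with no separate proof, and your one-line substitution of $d_{it}+d_{jt}=d_{kt}+\delta_{kt}-\delta_{3t}-(-1)^{\delta_{3t}}\mu_{n}$ into $s_{n,t}=d_{it}+d_{jt}+d_{kt}$ is precisely what is meant.
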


\section{Proof of Main Result}

Now we use the results of the previous section to prove the main result:

\begin{theorem}
[Main Result]\label{theomain}Given the continued fraction $\left[  a_{1}%
,a_{2},\cdots a_{m}\right]  $ that represents the rational $p/q,$ the integers
$p_{n}$ and $q_{n}$ defined in Definition \ref{defpyq} satisfy the recurrence
relations
\begin{align*}
p_{n+1}  &  =a_{n+1}p_{n}+p_{n-1},\ \text{for\ }1\leq n,\ \ p_{0}%
=1,\ p_{1}=a_{1},\\
q_{n+1}  &  =a_{n+1}q_{n}+q_{n-1},\ \text{for\ }1\leq n,\ \ q_{0}=0,q_{1}=1.
\end{align*}

Therefore $p_{n}/q_{n}$ is the $n$-convergent of the continued fraction
$\left[  a_{1},\cdots,a_{m}\right]  $ and so
\[
p_{m}/q_{m}=p/q\text{,}%
\]
therefore the rational link $C\left[  a_{1},a_{2},\cdots a_{m}\right]  $ is
equivalent to the 2-brigde link $p/q$.
\end{theorem}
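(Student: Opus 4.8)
The plan is to read off the two recurrences directly from Proposition \ref{propsum} by induction on $n$. Since $p_n$ depends only on $s_{n,1}$ and $q_n$ only on $s_{n,3}$, I would carry out the computation only for the columns $t=1$ and $t=3$, discarding column $2$ (which, as noted before Corollary \ref{corsn}, records redundant information). First I would dispose of the base cases. From $D_0=[0]$ one has $s_{0,1}=s_{0,3}=0$, and since $\mu_0=0$ Definition \ref{defpyq} gives $p_0=1$ and $q_0=0$; from the matrix $D_1$ in (\ref{matd1}) one has $s_{1,1}=\left\lfloor a_1/2\right\rfloor+\left\lceil a_1/2\right\rceil=a_1$ and $s_{1,3}=0$, and since $\mu_1=1$ Definition \ref{defpyq} gives $p_1=a_1$ and $q_1=1$. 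These match the prescribed initial values, so it remains to establish the step $n\to n+1$ for $n\ge 1$.

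For the inductive step, fix $n\ge 1$ and abbreviate $i=i_{n+1}$, $j=j_{n+1}$, $k=k_{n+1}$, $a=a_{n+1}$. Proposition \ref{propsum} writes $s_{n+1,t}=a\,s_{n,t}+a\left(d_{it}+d_{jt}-d_{kt}+\delta_{it}+\delta_{jt}\right)+\left(d_{it}-d_{jt}+d_{kt}-\delta_{jt}\right)$, with the $d_{rt}$ being the entries of $D_n$. The last bracket is exactly $s_{n-1,t}$ by Lemma \ref{lemsnmenos1}. In the first bracket I would substitute Lemma \ref{lemdk} in the form $d_{it}+d_{jt}-d_{kt}=\delta_{kt}-\delta_{3t}-(-1)^{\delta_{3t}}\mu_n$ and then use that $\{i,j,k\}$ is a permutation of $\{1,2,3\}$, so $\delta_{it}+\delta_{jt}+\delta_{kt}=1$; the first bracket collapses to $1-\delta_{3t}-(-1)^{\delta_{3t}}\mu_n$, which is $1-\mu_n$ for $t=1$ and $\mu_n$ for $t=3$. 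Hence $s_{n+1,1}=a\,s_{n,1}+a(1-\mu_n)+s_{n-1,1}$ and $s_{n+1,3}=a\,s_{n,3}+a\mu_n+s_{n-1,3}$.

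Finally I would translate back through Definition \ref{defpyq}, using $s_{n,1}=p_n-1+\mu_n$ and $s_{n,3}=q_n-\mu_n$ (and likewise at index $n-1$). Substituting into the first relation, the $a\mu_n$-terms cancel and one gets $p_{n+1}=s_{n+1,1}+1-\mu_{n+1}=a_{n+1}p_n+p_{n-1}+(\mu_{n-1}-\mu_{n+1})$; substituting into the second, $q_{n+1}=s_{n+1,3}+\mu_{n+1}=a_{n+1}q_n+q_{n-1}+(\mu_{n+1}-\mu_{n-1})$. Since $n-1$ and $n+1$ have the same parity, $\mu_{n-1}=\mu_{n+1}$, so both stray terms vanish and we obtain exactly $p_{n+1}=a_{n+1}p_n+p_{n-1}$ and $q_{n+1}=a_{n+1}q_n+q_{n-1}$. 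Comparing with (\ref{recfraccion}) and the initial values, $p_n/q_n$ is the $n$-th convergent of $\left[a_1,\ldots,a_m\right]$, whence $p_m/q_m=p/q$; combined with the reading of the final diagram in Section 2 (type $(s_{m,1}+1)/s_{m,3}$ when $m$ is even and $s_{m,1}/(s_{m,3}+1)$ when $m$ is odd, which Definition \ref{defpyq} packages uniformly as $p_m/q_m$) this gives the claimed equivalence of $C\left[a_1,\ldots,a_m\right]$ with the 2-bridge link $p/q$.

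I do not expect a genuine obstacle here: the computational weight of the argument already lies in Lemma \ref{lemdk} (the induction with its several parity cases) and in Proposition \ref{propsum}, both of which may be invoked. The only points needing attention are the correct reading of the base matrices $D_0$ and $D_1$ and the parity bookkeeping $\mu_{n-1}=\mu_{n+1}$ that makes the residual $\mu$-terms disappear; keeping the $t=1$ and $t=3$ computations separate should make the whole step transparent.
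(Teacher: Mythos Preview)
Your proposal is correct and follows essentially the same route as the paper: invoke Proposition~\ref{propsum}, replace the two brackets via Lemma~\ref{lemdk} (together with $\delta_{it}+\delta_{jt}+\delta_{kt}=1$) and Lemma~\ref{lemsnmenos1}, then substitute $s_{n,1}=p_n-1+\mu_n$ and $s_{n,3}=q_n-\mu_n$ and use $\mu_{n-1}=\mu_{n+1}$ to cancel the residual parity terms. Your write-up is slightly more explicit in evaluating the bracket at $t=1,3$ and in connecting the conclusion to the reading of the final diagram, but the argument is the same.
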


\begin{proof}
Let $\left[  a_{1},\cdots,a_{m}\right]  $ be a continued fraction. For $n=0,$
$D_{0}=\left[  0\right]  ,$ so $p_{0}=1$ and $q_{0}=0.$ From (\ref{matd1}),
$s_{1,1}=a_{1}$ and $s_{3,1}=0,$ so $p_{1}=a_{1}$ and $q_{1}=1.$

By Proposition \ref{propsum}%
\[
s_{n+1,t}=a_{n+1}s_{n,t}+a_{n+1}\left(  d_{it}+d_{jt}-d_{kt}+\delta
_{it}+\delta_{jt}\right)  +\left(  d_{it}-d_{jt}+d_{kt}-\delta_{jt}\right)  \
\]
If $i=i_{n+1},j=j_{n+1},k=k_{n+1}$ by Lemma \ref{lemdk}
\[
0=d_{it}+d_{jt}-d_{kt}-\delta_{kt}+\delta_{3t}+\left(  -1\right)
^{\delta_{3t}}\mu_{n},
\]
but $\delta_{it}+\delta_{jt}=1-\delta_{kt},$ so
\[
d_{it}+d_{jt}-d_{kt}-\delta_{kt}=d_{it}+d_{jt}-d_{kt}+\delta_{it}+\delta
_{jt}-1=-\delta_{3t}-\left(  -1\right)  ^{\delta_{3t}}\mu_{n}%
\]
and by Lemma \ref{lemsnmenos1}
\[
s_{n-1,t}=d_{it}-d_{jt}+d_{kt}-\delta_{jt}%
\]
so
\begin{equation}
s_{n+1,t}=a_{n+1}s_{n,t}+a_{n+1}\left(  1-\delta_{3t}-\left(  -1\right)
^{\delta_{3t}}\mu_{n}\right)  +s_{n-1,t} \label{ecrec1}%
\end{equation}
then, for $t=1,$ $s_{n,1}=p_{n}-1+\mu_{n}$%
\[
p_{n+1}-1+\mu_{n+1}=a_{n+1}\left(  p_{n}-1+\mu_{n}\right)  +a_{n+1}(1-\mu
_{n})+p_{n-1}-1+\mu_{n-1}%
\]
and $\mu_{n+1}=\mu_{n-1}$ so
\[
p_{n+1}=a_{n+1}p_{n}+p_{n-1}%
\]
For $t=3,$ $s_{n,3}=q_{n}-\mu_{n},$ so (\ref{ecrec1}) becomes%
\[
q_{n+1}-\mu_{n+1}=a_{n+1}\left(  q_{n}-\mu_{n}\right)  +a_{n+1}(1-1+\mu
_{n})+q_{n-1}-\mu_{n-1}%
\]
therefore%
\[
q_{n+1}=a_{n+1}q_{n}+q_{n-1}.
\]

\end{proof}

\section*{Acknowledgments}
Thanks to Universidad Nacional de Colombia, Sedes Medell\'{\i}n y Manizales.

\end{document}